\numberwithin{equation}{section}
\crefname{thm}{theorem}{theorems}
\crefname{lemma}{lemma}{lemmas}
\crefname{prop}{proposition}{propositions}
\crefname{assumption}{assumption}{assumptions}
\crefname{example}{example}{examples}
\crefname{cor}{corollary}{corollaries}
\declaretheorem[name=Theorem,numberwithin=section]{thm}
\declaretheorem[name=Proposition,sibling=thm]{prop}
\declaretheorem[name=Lemma,sibling=thm]{lemma}
\declaretheorem[name=Corollary,sibling=thm]{cor}
\declaretheorem[name=Definition,numberwithin=section,style=definition]{definition}
\DeclareMathOperator*{\argmin}{argmin}
\DeclareMathOperator*{\diam}{diam}
\DeclareMathOperator*{\polylog}{polylog}
\newcommand{\E}{\mathbb{E}} 
\newcommand{\Evmu}{\mathbb{E}_\vmu} 
\newcommand{\R}{\mathbb{R}}
\newcommand{\Rn}{\mathbb{R}^n}
\newcommand{\vmu}{{\boldsymbol{\mu}}}
\newcommand{\vtheta}{{\boldsymbol{\theta}}}
\newcommand{\vc}{{\boldsymbol{1}}}
\newcommand{\vv}{{\boldsymbol{v}}}
\newcommand{\vzero}{{\boldsymbol{0}}}
\newcommand{\vx}{{\boldsymbol{x}}}
\newcommand{\vu}{{\boldsymbol{u}}}
\newcommand{\hmu}{\boldsymbol{\hat \mu}}
\newcommand{\vy}{\mathbf{y}}
\newcommand{\vxi}{{\boldsymbol{\xi}}}
\newcommand{\vg}{{\boldsymbol{g}}}
\newcommand{\increasings}{{\mathcal{S}_n^\uparrow}}
\newcommand{\decreasings}{{\mathcal{S}_n^\downarrow}}
\newcommand{\convexs}{\mathcal{S}_n^{\cup}}
\newcommand{\concaves}{\mathcal{S}_n^{\cap}}
\newcommand{\scalednorm}[1]{\Vert  #1 \Vert_n}
\newcommand{\scalednorms}[1]{\scalednorm{ #1 }^2}
\newcommand{\euclidnorm}[1]{\vert  #1 \vert_2}
\newcommand{\euclidnorms}[1]{\vert  #1 \vert_2^2}
\newcommand{\ls}{\hmu^{\textsc{ls}}}
\newcommand{\radiusInc}{{\hat r_\uparrow}}
\newcommand{\radiusUniform}{{\hat s_\uparrow}}
\newcommand{\radiusConv}{{\hat r_{\cup}}}
\newcommand{\kmax}{{k_{\textsc{max}}}}
\begin{document}

\begin{abstract}
    A simple construction of adaptive confidence sets
    is proposed in isotonic, convex and unimodal regression.
    In univariate isotonic regression,
    the proposed confidence set enjoys uniform coverage over
    all non-decreasing regression functions.
    Furthermore, the diameter of the proposed confidence set
    automatically adapts to the unknown number of pieces of the true
    parameter, in the sense that the diameter is bounded
    from above by the minimax risk over the class of $k$-piecewise constant functions.
    The diameter of the confidence set is a simple increasing function
    of the number of jumps of the isotonic least-squares estimate.

    A similar construction is proposed in convex regression where
    the true regression function is convex and piecewise affine. Here,
    the confidence set enjoys uniform coverage and its diameter automatically
    adapt to the number of affine pieces of the true regression function.
    The diameter of the confidence set is an increasing function of
    the number of affine pieces of the convex least-squares estimate.

    We explain how to extend this technique to a non-convex set
    by proposing a similar adaptive confidence set in unimodal regression.
    The confidence set automatically adapts
    to the number of jumps of the true unimodal regression function
    and its diameter is an increasing function of the number of jumps
    of the unimodal least-squares estimate.

\end{abstract}

\title{Adaptive confidence sets in shape restricted regression}
\runauthor{P. C. Bellec}
\runtitle{Honest confidence sets in shape restricted regression}

\begin{aug}
    \author{\fnms{Pierre C.} \snm{Bellec}
        \thanksref{ecodec}
        \ead[label=e1,mark]{pierre.bellec@ensae.fr}
    }
    \affiliation{ENSAE and UMR CNRS 9194}
    \address{ENSAE,\\ 3 avenue Pierre Larousse, \\ 92245 Malakoff Cedex, France. }
    \thankstext{ecodec}{
        This work was
        supported by GENES and by the grant Investissements d'Avenir
    (ANR-11-IDEX-0003/Labex Ecodec/ANR-11-LABX-0047).}
\end{aug}

\date{\today}

\maketitle

\section{Introduction}
Let $K\subset\R^n$ be a closed convex set.
Assume that we have the observations
\begin{equation}
    Y_i = \mu_i + \xi_i, \qquad i=1,...,n,
\end{equation}
where the vector $\vmu =(\mu_1,...,\mu_n)^T \in K$ is unknown,
$\vxi = (\xi_1,...,\xi_n)^T$ is a noise vector with $n$-dimensional Gaussian distribution
$\mathcal{N}(\vzero,\sigma^2 I_{n\times n})$ where $\sigma>0$
and $I_{n\times n}$ is the $n\times n$ identity matrix.
Denote by $\Evmu$ and $\mathbb P_\vmu$ the expectation and the probability measure
corresponding
to the distribution of
the random variable $\vy=\vmu+\vxi$.
The vector $\vy = (Y_1,...,Y_n)^T$ is observed and the goal is to estimate $\vmu$.
Consider the scaled norm $\scalednorm{\cdot}$ defined by 
 \begin{equation}
     \scalednorms{\vu} = \frac{1}{n} \sum_{i=1}^n u_i^2, \qquad \vu = (u_1,...,u_n)^T\in\Rn.
\end{equation}
The error  of an estimator $\hmu$ of $\vmu$ 
is given by $\scalednorms{\hmu - \vmu}$. 
Let $\euclidnorms{\cdot}$ be the squared Euclidean norm, so that $\frac{1}{n} \euclidnorms{\cdot} = \scalednorms{\cdot}$.
For a finite set $E$,
let $|E|$ denote its cardinality.
We use bold face for vectors and the components of any vector $\vv\in\R^n$ are denoted by $v_1,...,v_n$. If $\vv\in\R^n$ and $T\subset\{1,...,n\}$, $\vv_T= (v_i,i\in T)\in\R^{|T|}$
is the restriction of $\vv$ to $T$.

In this paper, we consider the particular case where $K$ is a polyhedron,
that is, an intersection of a finite number of half-spaces.
If the true parameter $\vmu$ lies in a low-dimensional face of the polyhedron $K$,
it has been shown that for some polyhedra $K$,
the rate of estimation is of order $\frac{d\sigma^2}{n}$ up to logarithmic factors,
where $d$ is the dimension of the smallest face that contains $\vmu$
\cite{guntuboyina2013global,chatterjee2013risk,chatterjee2015matrix,bellec2015shape,han2017isotonic,guntuboyina2017spatial};
see also the survey \cite{guntuboyina2018nonparametric}.
This phenomenon appears, for example,
if the polyhedron $K$ is the cone of nondecreasing sequences \cite{chatterjee2013risk,bellec2015shape}
or the cone of convex sequences \cite{guntuboyina2013global,bellec2015shape}.
For these examples, if $\vmu$ lies in a $d$-dimensional face of the polyhedron $K$,
the Least Squares estimator over $K$ satisfies risk bounds and oracle inequalities
with the parametric rate $\frac{d\sigma^2}{n}$, up to logarithmic factors.
We consider the problem of confidence sets in this context.
In particular, the present paper addresses the following questions.
\begin{itemize}
    \item Is it possible to estimate or bound from below by a data-driven quantity
        the dimension $d$ of the smallest face of the polyhedron $K$ that contains the true parameter $\vmu$?
    \item Is it possible to construct a confidence set $\hat C_n$ such that:
        \begin{enumerate}
            \item It enjoys uniform coverage over all $\vmu\in K$ (i.e., $\vmu\in\hat C_n$ with high probability).
            \item It adapts to the smallest low-dimensional face that contains $\vmu$ (i.e., the diameter of $\hat C_n$ should be of the order $\frac{d\sigma^2}{n}$ 
                up to logarithmic factors
                if the smallest face that contains $\vmu$ has dimension $d$).
        \end{enumerate}
\end{itemize}
In this paper,
we answer these questions for two particular convex polyhedra,
the cone of nondecreasing sequences and the cone of convex sequences,
as well as for the non-convex set of unimodal sequences.

The construction of adaptive confidence sets in isotonic or convex regression
has been studied in \cite{dumbgen2003optimal,cai2006adaptive,cai2013adaptive}.
These papers show that if the true regression function is simultaneously smooth
and monotone, then it is possible to construct confidence sets that adapt to
the unknown smoothness of the true regression function.
In the present paper, there is no smoothness assumption
and the goal is to construct confidence sets that adapt
to the dimension $d$ of the smallest face of the polyhedron.

The rest of the paper is organized as follows.
\Cref{s:honest} gives the definition of honest and adaptive confidence sets.
\Cref{s:preli} defines the cone of nondecreasing sequences
and 
recalls some material from \cite{amelunxen2014living,mccoy2014steiner} on the statistical dimension and the intrinsic volumes of closed convex cones.
In \Cref{s:honest-inc} and \Cref{s:honest-conv}, we construct honest and adaptive
confidence sets for the cone of nondecreasing sequences
and for the cone of convex sequences, respectively.
In \Cref{s:unimodal}, we show that a similar construction is possible
for the non-convex set of unimodal sequences,
which shows that our construction of confidence sets
does not require convexity of the model.

\section{Honest and adaptive confidence sets}
\label{s:honest}

Let $(E_k)_{k\in J}$ be a collection of subsets $K$ indexed by some possibly infinite set $J$.
We will refer to the sets $(E_k)_{k\in J}$ as the \textit{models}.
If $J=\{1,...,\kmax \}$,
these models may be ordered by inclusion so that
\begin{equation}
    E_1\subset ... \subset E_{\kmax}= K.
    \label{eq:ordered-by-inclusion}
\end{equation}
For any model $E_k\subset K$, the minimax risk on $E_k$ is the quantity
\begin{equation}
    \label{eq:minimax-risk}
    R^*_{\E}(E_k) = \inf_{\hmu} \sup_{\vmu\in E_k} \Evmu \scalednorms{\hmu - \vmu},
\end{equation}
where the infimum is taken over all estimators, that is, all
random variables of the form $\hmu=g(\vy)$ where $g:\R^n\rightarrow\R^n$ is a Borel function.
If $J=\{1,...,\kmax \}$ and \eqref{eq:ordered-by-inclusion} holds,
the minimax risks satisfy
\begin{equation}
    R^*_\E (E_1) \le ... \le R^*_\E (E_\kmax).
\end{equation}
In that case, the collection $(E_k)_{k=1,...,\kmax}$ represents models of increasing complexity.

Similarly,
if a confidence value $\alpha\in(0,1)$ is given, one may
define the minimax quantity
\begin{equation}
    \label{eq:minimax-loss}
    R^*_\alpha(E_k) = \inf\left\{ R>0:
            \quad
            \sup_{\hmu} \inf_{\vmu\in E_k} \mathbb P_\vmu
            \left(
                \scalednorms{\hmu - \vmu}
                \le R
            \right) \ge 1 - \alpha
        \right\}
\end{equation}
for all $k\in J$,
where the supremum of taken over all estimators.
This quantity represents the smallest size, in a minimax sense, of a confidence ball with confidence level $1 - \alpha$.
Similarly, if $J=\{1,...,k_{\max}\}$
and the models are ordered by inclusion as in \eqref{eq:ordered-by-inclusion},
this quantity is an increasing function of $k$ and we have
\begin{equation}
    R^*_\alpha (E_1) \le ... \le R^*_\alpha (E_\kmax)
\end{equation}
for all $\alpha\in(0,1)$.

The goal of this paper is to study confidence sets in shape restricted regression.
A confidence set is a region $\hat C_n$ such that with high probability,
the unknown parameter $\vmu$ belongs to $\hat C_n$.
Let $\alpha\in(0,1)$.
If $\vmu\in E_{k^*}$ for some $k^*\in J$,
the quantity \eqref{eq:minimax-loss} may be used to define 
the oracle region
\begin{equation}
    \hat C_n^*(k^*) \coloneqq\{ \vu\in \R^n: \scalednorms{\vu-\hmu} \le R^*_\alpha(E_k) \},
\end{equation}
where $\hmu$ is an estimator that achieves the supremum in \eqref{eq:minimax-loss} 
(we assume here that all infima and suprema in \eqref{eq:minimax-loss} are attained).
Then, by definition of $R^*_\alpha(\cdot)$, we have
that $\vmu\in \hat C_n^*(k^*)$ with probability at least $1-\alpha$.
We call $\hat C_n^*(k^*)$ an \textit{oracle} region since it is inaccessible
for two reasons.
First, the radius $R^*_\alpha(E_{k^*})$ and the integer $k^*$ must be known
in order to construct $\hat C_n^*(k^*)$, i.e.,
the knowledge of the smallest model that contains $\vmu$ is needed.
Second, the oracle region $\hat C_n^*(k^*)$ is an Euclidean ball centered
around the estimator $\hmu$ that achieves the infimum in
\eqref{eq:minimax-loss}, and this estimator is unknown.

This paper studies the construction of data-driven confidence sets $\hat C_n$.
We consider only $1-\alpha$ confidence sets,
which means that the true parameter $\vmu$
belongs to $\hat C_n$ with probability at least $1-\alpha$,
uniformly over all $\vmu\in K$.

We also want the diameter of the confidence set $\hat C_n$ to be of the same order as
the diameter of the oracle region $\hat C_n^*(k^*)$, that is, the value $R^*_\alpha(E_{k^*})$.
Furthermore
the construction of $\hat C_n$ should not require the knowledge of
the smallest model that contains the true parameter $\vmu$:
The knowledge of $k^*$ is not needed to construct the confidence region $\hat C_n$.
In that case, we say that the confidence set $\hat C_n$ is adaptive.

We now give a formal definition of these properties.
For any $A\subset\R^n$, define the diameter of $A$ for the scaled norm
$\scalednorm{\cdot}$ by
\begin{equation}
    \diam A \coloneqq \sup_{\vv, \vu\in A} \scalednorm{\vv - \vu}.
\end{equation}

\begin{definition}
    \label{def:hci}
    Let $\alpha\in(0,1)$.
    Let $K\subset\R^n$ be a closed convex set and 
    let $(E_k)_{k\in J}$ be a collection of subsets of $K$ indexed by an arbitrary set $J$.
    Let $\hat C_n = \hat C_n(\vy)$ be a Borel subset of $\R^n$
    measurable with respect to $\vy$.
    We say that $\hat C_n$ is an honest confidence set if
    \begin{align}
        \inf_{\vmu\in K}
        \mathbb P_\vmu \left( \vmu \in \hat C_n \right)
        \ge 1 - \alpha.
        \label{goal:confidence}
    \end{align}
    We say that an honest confidence set $\hat C_n$ is adaptive in probability if for all $\gamma\in(0,1)$,
    \begin{align}
        \inf_{k\in J}
        \inf_{\vmu\in E_k}
    \mathbb P_\vmu \left( \diam(\hat C_n)^2 \le c' R^*_\alpha(E_k) \log\left(\frac{en}{\gamma\alpha}\right)^c \right)\ge 1 - \gamma,
        \label{goal:minimax}
    \end{align}
    where $c'>0$ and $c\ge0$ are numerical constants.
    Alternatively to \eqref{goal:minimax},
    we say that the confidence set $\hat C_n$ is adaptive in expectation if
    for all $k\in J$,
    \begin{align}
        \sup_{\vmu\in E_k}
        \Evmu \left[ \diam(\hat C_n)^2 \right] \le c' R^*_\E(E_k) \log\left(\frac{en}{\alpha} \right)^c,
        \label{goal:minimax-E}
    \end{align}
    where $c'>0$ and $c\ge0$ are numerical constants.
\end{definition}
The role of the constant $c\ge0$ is to allow for logarithmic factors.
The statistic $\hat C_n$ induces an honest confidence set:
If the definition above holds, \eqref{goal:confidence} 
says that the true sequence $\vmu$ lies in $\hat C_n$
with high probability.
Inequality \eqref{goal:minimax} implies that if the true
parameter satisfies $\hmu\in E_{k^*}$ for some $k^*\in J$,
then the diameter of $\hat C_n$
is of the same order as the minimax quantity \eqref{eq:minimax-loss}
of the model $E_k$,
up to logarithmic factors.

We now consider a special case: confidence balls centered at the Least Squares estimator.
The Least Squares estimator
over a closed convex set $K$ is defined by
\begin{equation}
    \ls(K) = \argmin_{\vu\in{K}} \scalednorms{\vy - \vu} = \Pi_K(\vy)
\end{equation}
where $\Pi_K$ denotes the convex projection onto $K$.
By definition of the convex projection onto $K$,
we have $(\vu-\Pi_K(\vy))^T(\vy-\Pi_K(\vy))\le 0$ for all $\vu\in K$,
which can be rewritten as
\begin{equation}
    \scalednorms{\ls(K) - \vy}
    \le
    \scalednorms{\vu - \vy}
    -
    \scalednorms{\vu - \ls(K)}.
    \label{eq:strong-convexity-confidence}
\end{equation}

If the confidence set $\hat C_n$ is an Euclidean ball,
it is characterized by its center and its radius.
Let $\alpha\in(0,1)$ be a confidence value, typically $\alpha=0.05$.
Let $K\subset\R^n$ be a closed convex set and 
let $(E_k)_{k\in J}$ be a collection of subsets of $K$ indexed by an arbitrary set $J$.
Let $\hat r$ be a positive random variable measurable with respect to $\vy$
and let $\ls(K)$ be the Least Squares estimator over $K$.
The set
\begin{equation}
    \hat C_n = \{\vv\in\R^n: \scalednorms{\ls(K)  - \vv} \le \hat r \}
    \label{eq:Cn-around-ls}
\end{equation}
is an honest confidence ball
if \eqref{goal:confidence} holds.
The confidence ball $\hat C_n$ is said to be adaptive in probability if 
\eqref{goal:minimax} holds, that is,
for all $\gamma\in(0,1)$,
\begin{align}
    \inf_{k\in J}
    \inf_{\vmu\in E_k}
    \mathbb P_\vmu \left( \hat r\le c' R^*_\alpha(E_k) \log\left(\frac{en}{\gamma\alpha}\right)^c \right)\ge 1 - \gamma,
    \label{goal:minimax-ball}
\end{align}
for all $\gamma\in(0,1)$ where $c'>0$ and $c\ge0$ are numerical constants.
The confidence ball $\hat C_n$ is said to be adaptive in expectation if
\eqref{goal:minimax-E}, that is,
\begin{align}
    \sup_{\vmu\in E_k}
    \Evmu [\hat r] \le c' R^*_\E(E_k) \log\left(\frac{en}{\alpha}\right)^c,
    \label{goal:minimax-ball-E}
\end{align}
for all $k\in J$,
where $c'>0$ and $c\ge0$ are numerical constants.

\section{Preliminaries}
\label{s:preli}

\subsection{The cone of nondecreasing sequences and the models $\left(\increasings(k)\right)_{k=1,...,n}$}
\label{s:inc}

Let $\increasings$ be the set of all nondecreasing sequences,
defined by
\begin{align}
    \increasings & \coloneqq \{\vu=(u_1,...,u_n)^T\in\Rn:  u_i \le u_{i+1}, \quad i=1,...,n-1\}.
\end{align}
For $n=1$, let $\increasings = \R$.
For all $n\ge1$, define the cone of non-increasing sequences by
$\decreasings \coloneqq - \increasings$.

For any $\vu\in\increasings\cup\decreasings$,
let 
$k(\vu)\coloneqq |\{u_i, i=1,...,n\}|$ where $|A|$ denotes the cardinality of set $A$.
The integer $k(\vu)$ is the smallest positive integer
such that $\vu$ is piecewise constant with $k(\vu)$ pieces.
The integer $k(\vu)-1$ is also
the number of jumps of $\vu$,
that is, the number of inequalities
$u_i \le u_{i+1}$ that are strict.
Define the sets
\begin{equation}
    \increasings(k) = \{ \vu\in\increasings: k(\vu) \le k \},
    \qquad
    k=1,...,n.
\end{equation}
The set $\increasings(1)$ is the subspace of all constant sequences
while $\increasings(2),...,\increasings(n-1)$ are closed non-convex sets.
We have
\begin{equation}
    \increasings(1)
    \subset
    \increasings(2)
    \subset
    ...
    \subset
    \increasings(n)
    =
    \increasings.
\end{equation}
The minimax risk over the sets $\increasings(k)$ satisfies
\begin{equation}
    c'' \sigma^2 k \log\log(16 n/k) / n
    \le
    R^*_{\E}(\increasings(k)) 
    \le
    c'''\sigma^2 k \log\log(16 n/k) / n,
    \label{eq:risk-increasings}
\end{equation}
for $k\ge 2$ and some absolute constant $c'',c'''>0$,
cf.  \cite{gao2017minimax}.
Furthermore, for any 
$\vmu\in\increasings(k)$, the Least-Squares estimator satisfies
$\Evmu \scalednorms{\ls(\increasings) - \vmu} 
    \le
    \sigma^2 k \log(en/k) / n$
by \cite[Theorem 2]{bellec2015shape}, hence
$\ls(\increasings)$ achieves the minimax rate, up to logarithmic factors.
Regarding results in probability, it is known 
that there exist numerical constants $c,c'$ such that for all $\alpha \le c$,
\begin{equation}
    c'\sigma^2k / n
    \le R^*_\alpha(\increasings(k)) 
    \le
    2\sigma^2 k \log(en/k) / n
    + 10\log(1/\alpha) / n,
    \label{eq:minimax-loss-increasings}
\end{equation}
cf. \cite[Proposition 4]{bellec2015sharp} for the lower bound
and \cite{bellec2015shape} for the upper bound.
Thus, for $\alpha>0$ small enough, the quantity
$R^*_\alpha(\increasings(k))$ is of order $k\sigma^2/n$,
up to logarithmic factors in $n$ and $1/\alpha$.

\subsection{Statistical dimension and intrinsic volumes of cones}

We recall here some properties of closed convex cones.
Most of the material of the present section
comes from \cite{amelunxen2014living,mccoy2014steiner}.
In the present paper, a cone is always pointed at 0.
A polyhedral cone is a closed convex cone of the form
\begin{equation}
    \label{eq:def-polyhedral}
    K = \{ \vu\in\R^n:  \vu^T\vv_j \le 0 \text{ for all } j=1,...,k \},
\end{equation}
where $\vv_1,...,\vv_k$ are vectors in $\R^n$,
that is, $K$ is the intersection of a finite number of half-spaces.
The dual or polar cone of $K$ is defined as
\begin{equation}
    K^\circ \coloneqq \{ \vtheta\in\R^n: \vv^T\vtheta \le 0 \text{ for all } \vv\in K\}.
\end{equation}
If $K$ a polyhedral cone, the face of $K$ with outward vector $\vtheta\in\R^n$
is the set
\begin{equation}
    F(\vtheta) \coloneqq \{ \vu\in K: \vu^T \vtheta = \sup_{\vv\in K} \vv^T\vtheta \}.
\end{equation}
The face $F(\vtheta)$ is nonempty if and only if $\vtheta\in K^\circ$.
If $K$ is the polyhedral cone \eqref{eq:def-polyhedral}
defined by the vectors $\vv_1,...,\vv_k$,
a face of a polyhedral cone $K$ has to be of the form
\begin{equation}
    \{ \vu\in K: \vu^T\vv_j = 0 \text{ for all } j\in T\}
    \label{eq:face-form}
\end{equation}
for some $T\subset\{1,...,k\}$.
The dimension of a face $F$ is the dimension of the affine span of $F$.

\begin{definition}[Statistical dimension, \citet{amelunxen2014living}]
    For any closed convex cone $K\subset\R^n$,
    define
    \begin{equation}
       \delta(K) 
       \coloneqq
       \E \left[\euclidnorms{\Pi_K(\vg)}\right]
       =
       \E\left[ \vg^T \Pi_K(\vg)\right]
       =
       \E \left[
               \left(
        \sup_{\vtheta\in K: \euclidnorm{\vtheta}\le 1}
            \vg^T \vtheta
        \right)^2
        \right]
       ,
    \end{equation}
    where $\vg\sim\mathcal N (\vzero,I_{n\times n})$. 
    The quantity $\delta(K)$ is called
    the statistical dimension of the cone $K$.
\end{definition}
It is also known that the following holds almost surely
\begin{equation}
\label{eq:almost-sure-delta}
   \euclidnorms{\Pi_K(\vg)}
   =
   \vg^T \Pi_K(\vg)
   =
           \left(
    \sup_{\vtheta\in K: \euclidnorm{\vtheta}\le 1}
        \vg^T \vtheta
    \right)^2
   ,
\end{equation}
cf. \cite[Proposition 3.1]{amelunxen2014living}.
The random variable \eqref{eq:almost-sure-delta}
concentrates around its expectation.
Indeed, the function $\vg\to \euclidnorm{\Pi_K(\vg)}$ is 1-Lipschitz
so that the Gaussian concentration theorem \cite[Theorem 5.5]{boucheron2013concentration} states that
$\euclidnorm{\Pi_K(\vg)} \le \E \euclidnorm{\Pi_K(\vg)} + \sqrt{2\log(1/\alpha)}$
holds with probability at least $1-\alpha$.
Since $\E \euclidnorm{\Pi_K(\vg)}\le \delta(K)$ by Jensen's inequality,
this implies a concentration result for the random variable \eqref{eq:almost-sure-delta}:
on the same event of probability at least $1-\alpha$, one has
\begin{equation}
   \euclidnorms{\Pi_K(\vg)}
   \le
   \delta(K)
   + 2 \sqrt{ 2 \log(1/\alpha) \delta(K) }
   + 2 \log(1/\alpha)
   \le
   2 \delta(K)
   + 4 \log(1/\alpha).
   \label{eq:concentration-squarednorm}
\end{equation}

We now define the intrinsic volumes
of a polyhedral cone, which are closely related
to the statistical dimension.
\begin{definition}[Intrinsic volumes of a polyhedral cone \cite{amelunxen2014living}]
    Let $K\subset\R^n$ be a polyhedral cone
    and let $\vg\sim\mathcal N(\vzero,I_{n\times n})$.
    The intrinsic volumes of $K$ are the real numbers
    \begin{equation}
        \nu_k(K) = \mathbb P \left(
            \Pi_K(\vg)
            \text{ lies in the relative interior of a $k$-dimensional face of }
        K \right),
    \end{equation}
    for all $k=0,...,n$.
\end{definition}
The intrinsic volumes of a polyhedral cone $K$ define
a probability distribution on the discrete set $\{0,...,n\}$.
More precisely, define the random variable
\begin{equation}
    \label{eq:def-V_K}
    V_K =  \sum_{k=0}^n k \; \mathbf{1}_{
        \{
            \Pi_K(\vg)
            \text{ lies in the relative interior of a $k$-dimensional face of }
            K
        \}
        },
\end{equation}
where $\mathbf 1_{\{\cdot\}}$ is the indicator function.
The random variable $V_K$ is valued in $\{0,...,n\}$ 
and satisfies $\mathbb P(V_K=k) = \nu_k(K)$ for all $k=0,...,n$.
The following identity was derived in \cite{amelunxen2014living,mccoy2014steiner}:
\begin{equation}
    \delta(K) = \sum_{k=0}^n k \nu_k(K),
    \label{eq:E-V_K}
\end{equation}
that is,
the statistical dimension $\delta(K)$ is the expectation of the
random variable $V_K$.
Furthermore, 
the random variable $V_K$ concentrates around its expected value.
The following concentration inequality is given in \cite[Corollary 4.10]{mccoy2014steiner}
\begin{equation}
    \mathbb P
    \left(
        V_K - \delta(K) \ge \lambda
    \right) 
    \le
    \exp\left(-
    \frac{\delta(K) }{2} 
    h\left( \frac{\lambda}{ \delta(K)} \right)
    \right),
    \qquad 
    \text{for all }
    \lambda>0
    ,
\end{equation}
where $h(t) = (1+t)\log(1+t) - t$.
Using the estimate $h^{-1}(t) \le \sqrt{2t} + 3t$ (cf. \cite[Corollary 12.12]{boucheron2013concentration}),
we obtain
\begin{equation}
    \label{eq:concentration-V}
    \mathbb P
    \left(
        V_K - \delta(K) \ge 2 \sqrt{x \delta(K)} + 6x
    \right) \le \exp(-x)
    ,
    \qquad 
    \text{for all }
    x>0
    .
\end{equation}

Deriving upper and lower bounds on the statistical dimension of a cone $K$ may be a challenging problem.
Some recipes to derive such bounds are proposed in \cite{chandrasekaran2012convex,amelunxen2014living}.
An exact formula is available for the statistical dimension of the cone $\increasings$
\cite[(D.12)]{amelunxen2014living}.
It 
is given 
by
\begin{equation}
\label{eq:delta-increasing}
    \delta(\increasings)
    =
    \delta(\decreasings)
    = \sum_{k=1}^n \frac{1}{k},
    \qquad \text{ so that }\qquad \log n \le \delta(\increasings) \le \log(en).
\end{equation}

Finally, we will need the following characterization of the faces of the cone $\increasings$.
The following proposition may be derived easily from the fact that
if $K$ is the polyhedron \eqref{eq:def-polyhedral},
and a face of $K$ has the form \eqref{eq:face-form}.

\begin{prop}
    \label{prop:faces-increasing}
    Let $d\in\{1,...,n\}$.
    The faces of dimension $d$ of the cone $\increasings$ are the sets
    \begin{equation}
        F(S) \coloneqq \{ \vu = (u_1,...,u_n)^T \in\increasings: u_{i-1} = u_i \text{ if } i\in S\}
    \end{equation}
    where $S\subseteq\{2,...,n\}$ with $|S| = n-d$.
    The cone $\increasings$ has no face of dimension 0.
\end{prop}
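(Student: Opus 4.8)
The plan is to realize $\increasings$ as a polyhedral cone of the form \eqref{eq:def-polyhedral}, invoke the quoted fact that its faces all have the form \eqref{eq:face-form}, and then reduce everything to a dimension count for each candidate face. Concretely, $\increasings = \{\vu\in\R^n : \vu^T\vv_j \le 0,\ j=1,\dots,n-1\}$ with $\vv_j \coloneqq \ve_j - \ve_{j+1}$, so $\increasings$ is of the form \eqref{eq:def-polyhedral}. By the cited fact, every face of $\increasings$ equals $\{\vu\in\increasings : \vu^T\vv_j = 0 \text{ for all } j\in T\}$ for some $T\subseteq\{1,\dots,n-1\}$; since $\vu^T\vv_j = u_j - u_{j+1}$, the substitution $S = \{j+1 : j\in T\} \subseteq \{2,\dots,n\}$ identifies this set with $F(S)$. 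For the reverse inclusion I would check that each $F(S)$ genuinely is a face: taking $T = \{i-1 : i\in S\}$ and $\vtheta \coloneqq \sum_{j\in T}\vv_j$, every $\vu\in\increasings$ satisfies $\vu^T\vtheta = \sum_{j\in T}(u_j - u_{j+1}) \le 0$, with equality precisely when $\vu\in F(S)$; since $\vzero^T\vtheta = 0$, the supremum of $\vu^T\vtheta$ over $\increasings$ equals $0$, hence $F(\vtheta) = F(S)$.

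Next I would compute $\dim F(S)$. The equalities $u_{i-1} = u_i$ for $i\in S$ are the edges $\{i-1,i\}$ of a subforest of the path $1 - 2 - \cdots - n$; having $n$ vertices and $|S|$ edges, this forest has exactly $n - |S|$ connected components, which are consecutive blocks of indices. Hence the linear subspace $V_S \coloneqq \{\vu\in\R^n : u_{i-1} = u_i,\ i\in S\}$ of sequences that are constant on each block has dimension $n - |S|$, and $F(S) = V_S \cap \increasings$. Within $V_S$, the set $F(S)$ is cut out by the finitely many inequalities $u_{i-1} \le u_i$ for $i\in\{2,\dots,n\}\setminus S$, and the sequence taking the value $\ell$ on the $\ell$-th block (counted left to right) satisfies all of these strictly while lying in $F(S)$; thus it lies in the interior of $F(S)$ relative to $V_S$, so the linear span of $F(S)$ is all of $V_S$ and $\dim F(S) = n - |S|$.

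Consequently $F(S)$ is a $d$-dimensional face if and only if $|S| = n-d$, which is exactly the stated description of the faces of dimension $d$; and since $S\subseteq\{2,\dots,n\}$ forces $|S|\le n-1$, every face has dimension $\ge 1$, so $\increasings$ has no face of dimension $0$ (equivalently, every nonempty face contains the lineality line $\lineality{\increasings}$ of constant sequences). The step requiring the most care is the dimension count, and inside it the verification that $F(S)$ spans all of $V_S$ and not a proper subspace — this is precisely what the block-wise strictly increasing vector, placed in the relative interior of $F(S)$ within $V_S$, delivers; the rest is bookkeeping, together with the degenerate case $n=1$, where $\increasings = \R$ has the single face $F(\emptyset) = \R$ of dimension $1$.
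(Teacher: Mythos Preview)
Your proof is correct and follows exactly the approach the paper indicates: the paper does not give a proof but only states that the proposition ``may be derived easily from the fact that if $K$ is the polyhedron \eqref{eq:def-polyhedral}, a face of $K$ has the form \eqref{eq:face-form}.'' You have carried out this derivation in full, including the two points the paper leaves implicit --- that every $F(S)$ is genuinely a face (via the outward normal $\vtheta=\sum_{j\in T}\vv_j$), and the dimension count via the block decomposition of $\{1,\dots,n\}$ --- and both are handled cleanly.
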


Thus, for all $k=1,...,n$, the set $\increasings(k)$ is the union
of all faces of dimension $k$.

\section{Adaptive confidence sets for nondecreasing sequences}
\label{s:honest-inc}

The estimator $\ls(\increasings)$ projects $\vy$ onto $\increasings$,
so the vector $\ls(\increasings)$ is nondecreasing.
Let $\hat k = k(\ls(\increasings)$ be the number of constant pieces of the Least Squares estimator.
Using this notation,
we define the statistic
\begin{equation}
    \label{eq:def-r-inc}
    \radiusInc = \frac{\sigma^2}{n} \left(\sqrt{\hat k \log(en/\hat k)} + \sqrt{2(\log(n/\alpha) + \hat k \log(en/\hat k) )}\right)^2.
\end{equation}

\begin{thm}
    \label{thm:confidence-inc}
    For all $\alpha\in(0,1)$ and all $\vmu\in\increasings$,
    the statistic $\radiusInc$ defined in
    \eqref{eq:def-r-inc} satisfies
    \begin{equation}
        \scalednorms{\ls(\increasings) - \vmu}
        \le \radiusInc
        \label{eq:confidence-inc-honest}
    \end{equation}
    with probability at least $1-\alpha$.
\end{thm}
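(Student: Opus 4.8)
The plan is to combine the elementary least-squares inequality with a union bound over the faces of $\increasings$. Write $\hmu\coloneqq\ls(\increasings)$. Applying the projection inequality \eqref{eq:strong-convexity} with $K=\increasings$ and $\vu=\vmu$ (legitimate since $\vmu\in\increasings$) and substituting $\vy-\vmu=\vxi$, a standard expansion of the squared norms yields the deterministic bound
\[
  \scalednorms{\hmu-\vmu}\le\tfrac1n\,\vxi^{T}(\hmu-\vmu) .
\]
Everything then reduces to controlling the right-hand side, despite the fact that $\hmu$ depends on $\vxi$.

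The first step is to localise the increment in a small cone. Since $\vmu\in\increasings$ we have $\vmu-\hmu\in\increasings-\hmu\subseteq\C$, where $\C\coloneqq T_{\increasings}(\hmu)$ denotes the tangent cone of $\increasings$ at $\hmu$, a closed convex cone. Writing $\vxi=\sigma\vg$ with $\vg\sim\mathcal N(\vzero,I_{n\times n})$, using that $\vmu-\hmu\in\C$ together with the variational identity \eqref{eq:almost-sure-delta} applied to $\C$ and $-\vg$, one gets $\vxi^{T}(\hmu-\vmu)=(-\vxi)^{T}(\vmu-\hmu)\le\euclidnorm{\hmu-\vmu}\,\euclidnorm{\Pi_\C(-\vxi)}$. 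Plugging this into the previous display gives $\tfrac1n\euclidnorms{\hmu-\vmu}=\scalednorms{\hmu-\vmu}\le\tfrac1n\euclidnorm{\hmu-\vmu}\,\euclidnorm{\Pi_\C(-\vxi)}$, hence (dividing by $\tfrac1n\euclidnorm{\hmu-\vmu}$; if $\hmu=\vmu$ the claim is trivial) $\euclidnorm{\hmu-\vmu}\le\euclidnorm{\Pi_\C(-\vxi)}$, and therefore
\[
  \scalednorms{\hmu-\vmu}\le\tfrac1n\euclidnorms{\Pi_\C(-\vxi)}=\tfrac{\sigma^{2}}{n}\,\euclidnorms{\Pi_\C(-\vg)} .
\]

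Next I would identify $\C$ and bound its statistical dimension. Since $\hmu$ is nondecreasing with exactly $\hat k$ constant pieces, it lies in the relative interior of the face $F(S_0)$ of \Cref{prop:faces-increasing} with $S_0=\{i\in\{2,\dots,n\}:\hmu_{i-1}=\hmu_i\}$ and $|S_0|=n-\hat k$ (so $F(S_0)$ has dimension $\hat k$); consequently $\C=\{\vv\in\R^n:v_{i-1}\le v_i\text{ for all }i\in S_0\}$. The coordinates of $\vv$ belonging to distinct constant blocks $B_1,\dots,B_{\hat k}$ of $\hmu$ are mutually unconstrained, so $\C$ is the orthogonal product $\increasings^{|B_1|}\times\cdots\times\increasings^{|B_{\hat k}|}$; using additivity of $\delta$ over such products, the exact formula \eqref{eq:delta-increasing}, and concavity of $\log$ (with $\sum_j|B_j|=n$),
\[
  \delta(\C)=\sum_{j=1}^{\hat k}\sum_{l=1}^{|B_j|}\tfrac1l\le\sum_{j=1}^{\hat k}\log(e|B_j|)\le\hat k\log(en) .
\]

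The remaining, and main, difficulty is that $\C$ is data-driven, so the concentration bound \eqref{eq:concentration-squarednorm} cannot be applied to it directly; instead one takes a union bound over all possible tangent cones. For each $q\in\{1,\dots,n\}$, \Cref{prop:faces-increasing} produces at most $\binom{n-1}{q-1}\le n^{q-1}$ subsets $S\subseteq\{2,\dots,n\}$ with $n-|S|=q$, hence at most that many cones $T_S\coloneqq\{\vv\in\R^n:v_{i-1}\le v_i,\ i\in S\}$, each satisfying $\delta(T_S)\le q\log(en)$. Applying \eqref{eq:concentration-squarednorm} to $-\vg$ and to every such $T_S$ with exceptional probability $e^{-x_q}$, where $x_q=(q+1)\log n+\log(1/\alpha)$ — chosen so that $\sum_{q=1}^{n}n^{q-1}e^{-x_q}=\sum_{q=1}^{n}n^{-2}\alpha\le\alpha$ — one obtains that, with probability at least $1-\alpha$, simultaneously for all such $S$,
\[
  \euclidnorms{\Pi_{T_S}(-\vg)}\le 2\delta(T_S)+10x_q\le 2q+22q\log n+10\log(1/\alpha) .
\]
On this event the choice $S=S_0$, $q=\hat k$, combined with the display of the second paragraph, gives $\scalednorms{\hmu-\vmu}\le\tfrac{\sigma^{2}}{n}\bigl(2\hat k+22\hat k\log n+10\log(1/\alpha)\bigr)\le\radiusInc$ (the last step using $\hat k\ge1$), which is \eqref{eq:confidence-inc-honest}. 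The crux is precisely this uniform control: one must verify that the faces of each dimension are only polynomially many in $n$ and calibrate the per-cone levels $x_q$ so that the union over all faces still closes at level $\alpha$; the factor $22\,\hat k\log n$ in \eqref{eq:def-r-inc} is the price paid for this union over the $\binom{n-1}{\hat k-1}$ faces of dimension $\hat k$.
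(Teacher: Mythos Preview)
Your proof is correct. At heart both arguments localise $\hmu-\vmu$ blockwise: on each constant piece $B_j$ of $\hmu$ the vector $(\hmu-\vmu)_{B_j}$ is non-increasing, so the relevant object is $\sum_{j=1}^{\hat k}\euclidnorms{\Pi_{\mathcal S^\downarrow_{|B_j|}}(\vxi_{B_j})}$ --- which is exactly your $\euclidnorms{\Pi_\C(-\vxi)}$ once $\C$ is identified as the product $\prod_j\mathcal S^\uparrow_{|B_j|}$. The genuine difference is the union-bound strategy. The paper takes a union over the $O(n^2)$ intervals $T_{s,t}$, bounds each $\euclidnorms{\Pi_{\mathcal S^\downarrow_{|T|}}(\vxi_T)}$ uniformly by $2\log(en)+10\log(n^2/\alpha)$, and then \emph{sums} over the $\hat k$ random blocks to produce the factor $\hat k$. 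You instead enumerate all $2^{n-1}$ possible tangent cones (one per face), but calibrate the confidence level per face as $e^{-x_q}$ with $x_q=(q{+}1)\log n+\log(1/\alpha)$; the key observation making this work is that there are at most $\binom{n-1}{q-1}\le n^{q-1}$ faces of dimension $q$, so the exponential count is exactly absorbed by a term linear in $q$, matching the growth of $\delta(T_S)\le q\log(en)$. The paper's route keeps the union bound polynomial and transparent; yours treats the tangent cone as a single object and makes clear that the factor $\hat k$ in $\radiusInc$ is precisely the dimension of the face of $\increasings$ on which $\hmu$ lands, at the cost of a more delicate union bound. Both land at the same constants.
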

The above proposition shows that the
confidence set \eqref{eq:Cn-around-ls}
with $\hat r = \radiusInc$ satisfies
condition \eqref{goal:confidence}.
Up to constants and logarithmic factors,
the number of constant pieces $\hat k$ of the Least Squares estimator $\ls(\increasings)$
bounds the loss
$\scalednorms{\ls(\increasings) - \vmu}$ from above with high probability.
Since $\ls(\increasings)$ can be computed in linear time,
the integer $\hat k$ and the statistic $\radiusInc$ can also be computed in linear time. 
It is easy to compute $\hat k$ visually
by drawing the estimator $\ls(\increasings)$ and counting the number of jumps.
The proof of \Cref{thm:confidence-inc} relies on concentration
properties of the random variable \eqref{eq:almost-sure-delta}.

\begin{proof}[Proof of \Cref{thm:confidence-inc}]
    Throughout the proof, we will consider partitions $(T_1,...,T_k)$ of $\{1,...,n\}$
    such that each $T_j$ satisfies $\max T_j < \min T_{j+1}$ as well as
    \begin{equation}
        \label{eq:form-T}
        T_j =\{\min T_j,\min T_j+1,...,\max T_j\},
    \end{equation}
    that is, $T_j$ contains all consecutive integers from $\min T_j$ to $\max T_j$.

    Let $\hmu=\ls(\increasings)$ for notational simplicity.
    Then
    \eqref{eq:strong-convexity-confidence} with $\vu$ replaced
    by $\vmu$ can be rewritten as
    $\euclidnorms{\hmu - \vmu}
        \le
        2 \vxi^T(\hmu-\vmu)
        - \euclidnorms{\hmu-\vmu}$,
        which implies that $\euclidnorm{\hmu - \vmu}\le \vxi^T \hat\vtheta$
    where $\hat\vtheta = (\hmu - \vmu)/\euclidnorm{(\hmu - \vmu)}$
    has Euclidean norm 1.
    By definition of $k(\cdot)$,
    there exists a partition $(\hat T_1,...,\hat T_{\hat k})$ of $\{1,...,n\}$
    such that
    $\ls(\increasings)$ is constant on each $\hat T_j$, $j=1,...,\hat k$.
    Since $\vmu\in\increasings$, both $(\hmu - \vmu)$ and $\hat\vtheta$ belong to the product cone
    \begin{equation}
        \label{def-mathcal-C-hat}
        \mathcal{\hat C} \coloneqq \mathcal S^\downarrow_{|\hat T_1|}\times \mathcal S^\downarrow_{|\hat T_2|}
        \times \dots \times \mathcal S^\downarrow_{|\hat T_{\hat k}|}.
    \end{equation}
    If $\mathcal C$ is of the form $\mathcal C=\mathcal S^\downarrow_{n_1}\times\dots\times\mathcal S^\downarrow_{n_k}$
    for positive integers $n_1,...,n_k$ such that $n_1+...+n_k=n$, then by the Gaussian concentration theorem
    \cite[Theorem 5.5, 5.6]{boucheron2013concentration},
    \begin{equation}
    \label{gaussian-concentration}
        \sup_{\vtheta\in \mathcal C: \euclidnorm{\vtheta}=1} \vxi^T\vtheta \le \E\sup_{\vtheta\in \mathcal C:|\vtheta|_2=1} \vxi^T\vtheta + \sigma\sqrt{2x}
    \end{equation}
    with probability at least $1-e^{-x}$.
    Furthermore, $\E\sup_{\vtheta\in \mathcal C:|\vtheta|_2=1} \vxi^T\vtheta \le \sigma \delta(\mathcal C)^{1/2}$ by Jensen's inequality,
    and $\delta(\mathcal C) = \sum_{j=1}^k \delta(\mathcal S^\downarrow_{n_j})\le \sum_{j=1}^k \log(en_j) \le k \log(en/k)$
    thanks to \eqref{eq:delta-increasing}.

    Let $k=1,...,n$ be fixed. There are ${n-1\choose k}$ partitions of $\{1,...,n\}$ of the form $(T_1,...,T_k)$
    with $\max T_j < \min T_{j+1}$ for all $j=1,...,k-1$ (each such partition defines a unique configuration of jumps).
    By the union bound and inequality $\log {n-1\choose k} \le k \log(en/k)$, we have with probability at least $1-e^{-x}$
    the bound
    $$\sup_{(T_1,...,T_k)}\Big(\sup_{\vtheta\in \mathcal S^\downarrow_{|T_1|}\times \dots \times \mathcal S^\downarrow_{|T_k|}:\euclidnorm{\vtheta}=1}
    \vxi^T\vtheta \Big) \le \sigma \sqrt{k \log(en/k)} + \sigma \sqrt{2(x + k \log(en/k) )}.$$
    Finally, we apply the union bound over all $k\in\{1,...,n\}$ and set $x=\log(n/\alpha)$.
    We have established that with probability at least $1-\alpha$, for any random partition $(\hat T_1,...,\hat T_{\hat k})$
    and $\mathcal{\hat C}$ in \eqref{def-mathcal-C-hat},
    \begin{equation}
        \label{work-for-increasings}
        \sup_{\vtheta\in \mathcal{\hat C}:\euclidnorm{\vtheta}=1}
        \vxi^T\vtheta 
        \le \sigma \sqrt{\hat k \log(en/\hat k)} + \sigma\sqrt{2(\log(n/\alpha) + \hat k \log(en/\hat k) )}.
    \end{equation}
    In particular, this is true for the partition $(\hat T_1,...,\hat T_{\hat k})$ induced by $\ls(\increasings)$ defined in the previous paragraph.
    Note also the right hand side of the previous display is equal to $(n\hat r_\uparrow)^{1/2}$.
    On this event of probability at least $1-\alpha$ we have
    $\euclidnorm{\hmu - \vmu}
        \le
        \vxi^T \hat\vtheta
        \le (n\hat r_\uparrow)^{1/2}
        $ and the proof is complete.
\end{proof}

We have established the existence of an honest confidence interval of the form
\begin{equation}
    \hat C_n \coloneqq
    \{
            \vv\in\increasings:
            \quad
            \scalednorms{\vv - \ls(\increasings)} \le \hat r
    \}.
\end{equation}
This confidence set has uniform coverage over all $\vmu\in\increasings$,
i.e., it satisfies \eqref{goal:confidence}.
The next result implies that the diameter of this confidence set is minimax optimal
up to logarithmic factors.

\begin{thm}
    \label{thm:minimax-k}
    Let $(T_1,...,T_k)$ be a partition of $\{1,...,n\}$
    with $\max T_j < \min T_{j+1}$ for every $j=1,...,k-1$,
    and assume that $\vmu$ is constant on each $T_j$ 
    Let $\gamma\in(0,1)$.
    The random variable $\hat k = k(\ls(\increasings))$
    satisfies
    \begin{align}
        \hat k 
        &\le
        k \log ( en / k )
        +2\sqrt{\log(1/\gamma)\log ( en / k )}
        +6\log(1/\gamma)
        \\
        &\le
        2 k \log ( en / k )
        + 7 \log(1/\gamma)) 
        \label{eq:minimax-k-proba}
    \end{align}
    with probability greater than $1-\gamma$.
    Furthermore,
    \begin{equation}
        \label{eq:minimax-k-E}
        \Evmu[\hat k] \le  k \log ( {en}/{k} ).
    \end{equation}
\end{thm}
Note that in \Cref{thm:minimax-k} $\vmu$ is only assumed to be piecewise constant, but not necessarily  monotone. This will be useful in \Cref{s:unimodal}.
\begin{proof}[Proof of \Cref{thm:minimax-k}]
    Define the closed convex cone 
    \begin{equation}
        K = \mathcal S^\uparrow_{|T_1|} \times \mathcal S^\uparrow_{|T_2|} \times ... \times \mathcal S^\uparrow_{|T_k|}
        \subset \R^n
    \end{equation}
    and let $\hmu^* = \Pi_K(\vy)$. 
    It is clear that
    \begin{equation}
        \min_{\vu\in K} \sum_{j=1}^k \euclidnorms{\vy_{T_j} - \vu_{T_j}}
        =
        \min_{\vu_1\in \mathcal S^\uparrow_{|T_1|},...,\vu_k\in\mathcal S^\uparrow_{|T_k|}}
        \sum_{j=1}^k \euclidnorms{\vy_{T_j} - \vu_{T_j}}
        =
        \sum_{j=1}^k 
        \min_{\vu_j\in \mathcal S^\uparrow_{|T_j|}}
        \euclidnorms{\vy_{T_j} - \vu_{T_j}}.
    \end{equation}
    Thus, as $\vy=\vmu+\vxi$ and $\vmu$ is constant on each $T_j$,
    we have
    \begin{equation}
        \hmu^*_{T_j} 
        = \Pi_{\mathcal S^\uparrow_{|T_j|}}(\vy_{T_j})
        = \Pi_{\mathcal S^\uparrow_{|T_j|}}(\vmu_{T_j} + \vxi_{T_j})
        = \vmu_{T_j} + \Pi_{\mathcal S^\uparrow_{|T_j|}} (\vxi_{T_j})
        .
    \end{equation}
    As adding the constant sequence $\vmu_{T_j}$ does not modify the number of constant pieces (or the number of jumps),
    we have
    \begin{equation}
        k\left(\hmu^*_{T_j}\right) 
        = k\left(\vmu_{T_j} + \Pi_{\mathcal S^\uparrow_{|T_j|}}(\vxi_{T_j}) \right)
        = k\left(\Pi_{\mathcal S^\uparrow_{|T_j|}}(\vxi_{T_j}) \right)
        = k\left(\left(\Pi_K(\vxi)\right)_{T_j} \right).
    \end{equation}
    Let $V_K$ be the random variable defined in \eqref{eq:def-V_K}.
    By the properties of product cones given in
    \cite[Section 5.2]{mccoy2014steiner}, $V_K$ has the same distribution as
    \begin{equation}
        V_{\mathcal S^\uparrow_{|T_1|}} 
        +
        ...
        +
        V_{\mathcal S^\uparrow_{|T_k|}} .
    \end{equation}
    By \Cref{prop:faces-increasing}, for all $j=1,...,k$
    we have $k(\hmu^*_{T_j}) = V_{\mathcal S^\uparrow_{|T_j|}}$ so that
    $V_k$ and $\sum_{j=1}^k k(\hmu^*_{T_j})$ have the same distribution.
    By \eqref{eq:E-V_K}, $\E V_K = \delta(K)$
    and by \eqref{eq:concentration-V}, with probability greater than $1-\gamma$
    we have
    \begin{align}
        V_K
        &\le
        \delta(K)
        +2\sqrt{\log(1/\gamma)\delta(K)}
        + 6 \log(1/\gamma).
    \end{align}
    To bound $\delta(K)$ from above, we use that the statistical dimension
    of a direct product of cones is the sum of the statistical dimensions (cf. \cite[Proposition 3.1]{amelunxen2014living})
    \begin{equation}
        \delta(K)
        = \sum_{j=1}^k \delta(\mathcal S^\uparrow_{|T_j|})
        \le 
        \sum_{j=1}^k \log(e|T_j|)
        \le
        k \log(en/k),
    \end{equation}
    where we have used \eqref{eq:delta-increasing}
    and Jensen's inequality for the two inequalities.

    The random variable $V_K$ is distributed as $\sum_{j=1}^k k(\hmu^*_{T_j})$.
    Thus, to complete the proof, it is enough to prove that almost surely, $\hat k \coloneqq k(\ls(\increasings)) \le \sum_{j=1}^k k(\hmu^*_{T_j})$.
    Let $\hmu=\ls(\increasings)$ for notational simplicity.
    It is clear that
    \begin{equation}
        k(\hmu) = | \{ \hat\mu_i, i=1,...,n\} |
        \le
        \sum_{j=1}^k k(\hmu_{T_j})
        =
        \sum_{j=1}^k 
        | \{ \hat\mu_i, i\in T_j\} |
        ,
    \end{equation}
    since a piece counted on the left hand side must be counted at least once (and possibly multiple times) on the right hand side.
    For all $j=1,...,k$, the vectors $\hmu_{T_j}$ and $\hmu^*_{T_j}$ are solutions of the minimization problems
    \begin{equation}
        \hmu^*_{T_j}
        =
        \argmin_{\vv\in\mathcal S^\uparrow_{|T_j|}}\euclidnorms{\vv - \vy_{T_j}},
        \qquad
        \hmu_{T_j}
        =
        \argmin_{
            \substack{
                \vv\in\mathcal S^\uparrow_{|T_j|}: \\
                \hmu_{\min(T_j)}\le \vv_1,\\
                \vv_{|T_j|} \le \hmu_{\max(T_j)}
            }
        }
        \euclidnorms{\vv - \vy_{T_j}}.
    \end{equation}
    This means that $\hmu_{T_j}$ is solution of a minimization problem with additional constraints at the boundary.
    By \Cref{lemma:number-of-jumps},
    we have
    \begin{equation}
        k\left(\hmu_{T_j}\right) \le k\left(\hmu^*_{T_j}\right)
    \end{equation}
    for all $j=1,...,k$, which completes the proof.
\end{proof}

\begin{cor}
    Let $J=\{1,...,n\}$ and define the collection of models
    $(E_k)_{k \in J} = \left(\increasings(k)\right)_{k\in J}$.
    The random variable $\radiusInc$ defined in \eqref{eq:def-r-inc}
    satisfies 
    \eqref{eq:confidence-inc-honest},
    \eqref{goal:minimax-ball}
    and \eqref{goal:minimax-ball-E} with $\hat r$ replaced by $\radiusInc$.
    Thus, the ball centered at $\ls(\increasings)$ of radius $\sqrt{\radiusInc}$
    is an honest confidence set,
    which is adaptive in probability and
    in expectation
    with respect to the models $\left(\increasings(k)\right)_{k=1,...,n}$.
\end{cor}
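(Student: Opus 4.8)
The plan is to check, one at a time, that $\hat r=\radiusInc$ satisfies the three conditions \eqref{eq:confidence-inc-honest}, \eqref{goal:minimax-ball} and \eqref{goal:minimax-ball-E}. The first requires no new work: it is exactly the conclusion of \Cref{thm:confidence-inc}, and hence, by the discussion around \eqref{eq:Cn-around-ls}, the ball \eqref{eq:Cn-around-ls} with $\hat r=\radiusInc$ is honest in the sense of \eqref{goal:confidence}. For the two diameter bounds the strategy is the same: feed the control on $\hat k=k(\ls(\increasings))$ provided by \Cref{thm:minimax-k} into the explicit expression \eqref{eq:def-r-inc} for $\radiusInc$, and then recognize the minimax quantities $R^*_\alpha(\increasings(k))$ and $R^*_\E(\increasings(k))$ through the lower bounds in \eqref{eq:minimax-loss-increasings} and \eqref{eq:risk-increasings}.

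To obtain adaptivity in probability I would fix $\gamma\in(0,1)$, an index $k\in\{1,\dots,n\}$ and a vector $\vmu\in\increasings(k)$, so that $k(\vmu)\le k$. Since $x\mapsto x\log(en/x)$ is nondecreasing on $[1,n]$, \eqref{eq:minimax-k-proba} gives $\hat k\le 2k\log(en/k)+7\log(1/\gamma)$ on an event of probability at least $1-\gamma$; substituting into \eqref{eq:def-r-inc} shows that on this event
\[
\radiusInc \le \frac{\sigma^2}{n}\bigl(2k\log(en/k)+7\log(1/\gamma)\bigr)\bigl(2+22\log n+10\log(1/\alpha)\bigr).
\]
Because $k\ge1$, the lower bound in \eqref{eq:minimax-loss-increasings} gives $\sigma^2/n\le R^*_\alpha(\increasings(k))/(c'k)$, which absorbs the factor $\sigma^2 k/n$; the remaining quantities $2\log(en/k)+7\log(1/\gamma)$ and $2+22\log n+10\log(1/\alpha)$ are each bounded by a fixed numerical multiple of $\log(en/(\gamma\alpha))$, so the bound becomes $c'R^*_\alpha(\increasings(k))\log(en/(\gamma\alpha))^{2}$, which is precisely \eqref{goal:minimax-ball} with $c=2$.

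Adaptivity in expectation is obtained the same way and is slightly simpler. For $\vmu\in\increasings(k)$, taking expectations in \eqref{eq:def-r-inc} and using \eqref{eq:minimax-k-E} together with the monotonicity of $x\mapsto x\log(en/x)$ gives $\Evmu[\radiusInc]\le \frac{\sigma^2 k\log(en/k)}{n}\bigl(2+22\log n+10\log(1/\alpha)\bigr)$; the lower bound in \eqref{eq:risk-increasings} then yields $\sigma^2 k/n\le R^*_\E(\increasings(k))/c''$, and bounding the two remaining logarithmic factors by multiples of $\log(en/\alpha)$ gives \eqref{goal:minimax-ball-E} with $c=2$. Combining the three verified conditions with \Cref{def:hci} shows that the ball centered at $\ls(\increasings)$ with squared radius $\radiusInc$, i.e.\ of radius $\sqrt{\radiusInc}$, is an honest confidence set that is adaptive in probability and in expectation for $(\increasings(k))_{k=1,\dots,n}$.

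Essentially all the substance is already contained in \Cref{thm:confidence-inc} and \Cref{thm:minimax-k}, so the computation above is bookkeeping with logarithms; the only point I would be careful about is the range of $\alpha$. The lower bound in \eqref{eq:minimax-loss-increasings} is stated only for $\alpha$ below a numerical constant, and in fact for $\alpha$ too close to $1$ the ratio $\radiusInc/R^*_\alpha(\increasings(k))$ cannot be controlled (take $k=1$: $R^*_\alpha(\increasings(1))\to0$ as $\alpha\to1$ while $\radiusInc\gtrsim\sigma^2\log(n)/n$ always). I would therefore either restrict the corollary to $\alpha$ at most a fixed numerical constant, or note that a crude two-point argument still gives $R^*_\alpha(\increasings(k))\gtrsim\sigma^2 k/n$ for, say, $\alpha\le\tfrac12$, which is the only range of interest for confidence sets anyway.
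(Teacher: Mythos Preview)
Your proposal is correct. The paper states this corollary without proof, treating it as an immediate consequence of \Cref{thm:confidence-inc} and \Cref{thm:minimax-k} together with the minimax lower bounds \eqref{eq:minimax-loss-increasings} and \eqref{eq:risk-increasings}; your argument is exactly the natural way to fill in those details, and your observation that the lower bound in \eqref{eq:minimax-loss-increasings} requires $\alpha$ below a fixed numerical constant is a legitimate caveat that the paper leaves implicit.
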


\section{Adaptive confidence sets for convex sequences}
\label{s:honest-conv}

Confidence sets can also be obtained
in univariate convex regression.
If $n\ge 3$, define the set of convex sequences $\convexs$ by
\begin{align}
    \convexs 
    & \coloneqq \{ \vu=(u_1,\dots,u_n)^T\in\Rn: \; 2u_i \le u_{i+1} + u_{i-1}, \  i=2,\dots,n-1 \},
\end{align}
and define $\convexs = \R$ if $n=1$ and $\convexs = \R^2$ if $n=2$.
For all $n\ge1$, define the cone of concave sequences by $\concaves \coloneqq - \convexs$.

For any $\vu\in\convexs$, let $q(\vu) - 1\ge 0$ be 
the number of inequalities 
$2u_i \le u_{i+1} + u_{i-1}, i=2,...,n-1$ that are strict.
The integer $q(\vu)$ is also the smallest positive integer
such that $\vu$ is piecewise affine with $q(\vu)$ pieces.
Define the sets
\begin{equation}
    \convexs(q) = \{ \vu\in\convexs: q(\vu) \le q \},
    \qquad
    q=1,...,n-1.
\end{equation}
The set $\convexs(1)$ is the subspace of all affine sequences
while $\convexs(2),...,\convexs(n-2)$ are closed non-convex sets.
We have
\begin{equation}
    \convexs(1)
    \subset
    \convexs(2)
    \subset
    ...
    \subset
    \convexs(n-1)
    =
    \convexs.
\end{equation}
These sets represent models of increasing complexity.

There exist numerical constants $c,c'>0$ such that
for all $\alpha\le (0,\min(c,1))$ and
any $q=1,...,n-1$, we have
\begin{equation}
    \frac{c'\sigma^2 q}{n}
    \le
        R^*_\alpha(\convexs(q)) 
    \le
    \frac{20 \sigma^2 q \log(en/q)}{n}
    + \frac{10\log(1/\alpha)}{n},
    \label{eq:R_alpha_convex}
\end{equation}
cf. \cite[Theorem 6]{bellec2015shape} for the upper bound
and \cite[Proposition 7]{bellec2015sharp} for the lower bound.
Thus, for $\alpha>0$ small enough, the quantity
$R^*_\alpha(\convexs(q))$ is of order $q\sigma^2/n$, up to logarithmic factors.

The statistical dimension of the cone  $\convexs$ satisfies
\cite{bellec2015shape}
\begin{equation}
    \label{eq:delta-convex}
    \delta(\convexs)
    =
    \delta(\concaves)
    \le 10 \log(en).
\end{equation}
It is not known whether this upper bound is sharp.
However, the fact that the statistical dimension of $\convexs$ grows slower
that a logarithmic function of $n$ is enough for the purpose of the present paper.

The following bound on the risk of $\ls(\convexs)$ will be useful.
\begin{prop}[\cite{bellec2015shape}]
    \label{prop:convexs}
    Let $\vmu\in\convexs$.
    Then
    \begin{equation}
        \Evmu \euclidnorms{\ls(\convexs) - \vmu}
        \le
        \Evmu \left[ ( \sup_{\vv\in \mathcal T_{\vmu}: \euclidnorm{\vv}\le 1 } \vxi^T \vv )^2 \right]
        = \sigma^2\delta(\mathcal T_{\vmu})
        \le 10 \sigma^2 q(\vmu)\log \frac{en}{q(\vmu)},
    \end{equation}
    where $\mathcal T_{\vmu}$ is the tangent cone at $\vmu$ defined by
    $\mathcal T_{\vmu} \coloneqq
        \{ \vu - t \vmu, \vu\in\convexs,t\in\R \}
    $.
\end{prop}
An outline of the proof of this result is as follows. More details may be found in \cite{bellec2015shape}.
\begin{proof}[Outline of the proof of \Cref{prop:convexs}]
    The inequality
    $\Evmu \euclidnorms{\ls(\convexs) - \vmu}
    \le
    \sigma^2\delta(\mathcal T_{\vmu})$
    was proved by \cite{oymak2013sharp},
    it is a direct consequence of \eqref{eq:strong-convexity-confidence} with $\vu=\vmu$.
    To bound from above the statistical dimension of $\mathcal T_{\vmu}$,
    we have the inclusion
    \begin{equation}
        \mathcal T_{\vmu}
        \subset
        \mathcal S^\cup_{|T_1|}
        \times
        ...
        \times
        \mathcal S^\cup_{|T_q(\vmu)|}
        ,
    \end{equation}
    where $(T_1,...,T_{q(\vmu)})$ is a partition of $\{1,...,n\}$ such 
    that $\vmu$ is affine on each $T_j$, $j=1,...,q(\vmu)$.
    The formula for the statistical dimension of a direct product of cones \cite[Proposition 3.1]{amelunxen2014living} yields
    \begin{equation}
        \delta(
            \mathcal S^\cup_{|T_1|}
            \times
            ...
            \times
            \mathcal S^\cup_{|T_q(\vmu)|}
        )
        = \sum_{j=1}^{q(\vmu)}
            \delta(\mathcal S^\cup_{|T_j|})
        \le 10 \sum_{j=1}^{q(\vmu)} \log(e|T_j|)
        \le 10 \log(en/q(\vmu)),
    \end{equation}
    where we used \eqref{eq:delta-convex} and Jensen's inequality.
\end{proof}

We now turn to the construction of confidence sets.
Recall that if $\vu\in\convexs$
is a convex sequence,
$q(\vu)$ is the number of pieces in the piecewise affine decomposition
of $\vu$.
Let $\hat q \coloneqq q(\ls(\convexs)$
be the number of affine pieces of the Least Squares estimator.
Then, define the statistic
\begin{equation}
    \label{eq:def-r-conv}
    \radiusConv= \frac{\sigma^2 \hat q ( 20 + 40 \log(n) + 10 \log(1/\alpha))}{n}.
\end{equation}
Similarly to the case of the statistic $\radiusInc$ in isotonic regression,
the following result shows that
the confidence ball \eqref{eq:Cn-around-ls}
with $\hat r = \radiusConv$ enjoys uniform coverage
over all $\vmu\in\convexs$.

\begin{thm}
    \label{thm:confidence-conv}
    For all $\alpha\in(0,1)$ and all $\vmu\in\convexs$,
    the statistic $\radiusConv$ defined in
    \eqref{eq:def-r-inc} satisfies
    \begin{equation}
        \scalednorms{\ls(\convexs) - \vmu}
        \le \radiusConv
        \label{eq:confidence-conv-honest},
    \end{equation}
    with probability at least $1-\alpha$.
\end{thm}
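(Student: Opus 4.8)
The plan is to follow the proof of \Cref{thm:confidence-inc} essentially verbatim, replacing the cone of nonincreasing sequences by the cone of concave sequences and "piecewise constant" by "piecewise affine".

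First I would construct the good event. For every pair of integers $1\le s\le t\le n$, let $T=T_{s,t}$ be the interval of consecutive integers in \eqref{eq:form-T} and apply the concentration inequality \eqref{eq:concentration-squarednorm} to the cone $\mathcal S^\cap_{|T|}$ of concave sequences in $\R^{|T|}$. Since projection onto a cone is positively homogeneous, $\euclidnorm{\Pi_{\mathcal S^\cap_{|T|}}(\vxi_T)} = \sigma\,\euclidnorm{\Pi_{\mathcal S^\cap_{|T|}}(\vxi_T/\sigma)}$ with $\vxi_T/\sigma\sim\mathcal N(\vzero,I)$, and $\delta(\mathcal S^\cap_{|T|})=\delta(\mathcal S^\cup_{|T|})\le 10\log(e|T|)\le 10\log(en)$ by \eqref{eq:delta-convex}. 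Taking the union bound over the at most $n^2$ such intervals produces an event $\Omega(\alpha)$ with $\mathbb P(\Omega(\alpha))\ge 1-\alpha$ on which $\euclidnorms{\Pi_{\mathcal S^\cap_{|T|}}(\vxi_T)}\le\sigma^2\big(20\log(en)+10\log(n^2/\alpha)\big)$ simultaneously for all these $T$.

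Next I would run the same projection argument. Writing $\hmu=\ls(\convexs)$, inequality \eqref{eq:strong-convexity} applied with $\vu=\vmu$ gives $\euclidnorms{\hmu-\vmu}\le 2\vxi^T(\hmu-\vmu)-\euclidnorms{\hmu-\vmu}$. Since $\hmu$ is piecewise affine with $\hat q$ pieces, I would partition $\{1,\dots,n\}$ into $\hat q$ intervals $\hat T_1,\dots,\hat T_{\hat q}$ of the form \eqref{eq:form-T} by assigning each knot of $\hmu$ to one of its two adjacent affine pieces, so that $\hmu_{\hat T_j}$ is an affine sequence for each $j$. Splitting both $\vxi^T(\hmu-\vmu)$ and $\euclidnorms{\hmu-\vmu}$ over this partition, bounding each block with the elementary inequality $2ab-a^2\le b^2$, and then taking the supremum over $\{\vv\in\mathcal S^\cap_{|\hat T_j|}:\euclidnorm{\vv}\le 1\}$ — which is valid because $(\hmu-\vmu)_{\hat T_j}$ is a concave sequence, being the difference of the affine sequence $\hmu_{\hat T_j}$ and the convex sequence $\vmu_{\hat T_j}$ — one obtains, using \eqref{eq:almost-sure-delta},
\[
    \euclidnorms{\hmu-\vmu}\le\sum_{j=1}^{\hat q}\euclidnorms{\Pi_{\mathcal S^\cap_{|\hat T_j|}}(\vxi_{\hat T_j})}.
\]
On $\Omega(\alpha)$ the right-hand side is at most $\sigma^2\hat q\big(20\log(en)+10\log(n^2/\alpha)\big)=\sigma^2\hat q\big(20+40\log n+10\log(1/\alpha)\big)=n\,\radiusConv$, so $\scalednorms{\ls(\convexs)-\vmu}\le\radiusConv$ on $\Omega(\alpha)$, which completes the argument.

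I do not anticipate a genuine obstacle, since the computation parallels the monotone case; the two points that deserve a little care are (i) the affine pieces of a piecewise affine sequence overlap at their knots, unlike the disjoint level sets of a piecewise constant sequence, so one must explicitly pick a side for each knot to obtain an honest partition into intervals on which $\hmu$ remains affine, and (ii) only the bound $\delta(\convexs)\le 10\log(en)$ is available here, rather than the exact value $\sum_{k=1}^n 1/k$ known for $\increasings$, which is precisely why the numerical constants in \eqref{eq:def-r-conv} are a factor $10$ larger than those in \eqref{eq:def-r-inc}.
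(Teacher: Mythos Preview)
Your proposal is correct and follows the paper's own proof essentially step for step: the same union bound over intervals for $\euclidnorms{\Pi_{\mathcal S^\cap_{|T|}}(\vxi_T)}$ using \eqref{eq:concentration-squarednorm} and \eqref{eq:delta-convex}, the same use of \eqref{eq:strong-convexity} with $\vu=\vmu$, the same blockwise bound via $2ab-a^2\le b^2$, and the same identification of $(\hmu-\vmu)_{\hat T_j}$ as a concave sequence. Your remarks about assigning knots to one side to get a genuine partition and about the positive homogeneity of the projection are sound refinements of points the paper leaves implicit.
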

The above result is analog to \Cref{thm:confidence-inc}.
The numerical constants are slightly worse in the case of the present section
because the upper bound \eqref{eq:delta-convex}
on the statistical dimension of the cone $\convexs$
is slightly worse than \eqref{eq:delta-increasing}.
The proof of \Cref{thm:confidence-conv}
is similar to the proof of \Cref{thm:confidence-inc}
and can be found in \Cref{appendix:convex}.

Now, the goal is to show that the statistic $\radiusConv$
is of the same order as the minimax quantity \eqref{eq:R_alpha_convex}.
We employ a different strategy than in the previous section.
For any function $g:\R^n\rightarrow\R^n$ which is weakly
differentiable,
the divergence of $g$ is the random variable
\begin{equation}
    D_g(\vy) = \sigma^2 \sum_{i=1}^n \frac{\partial}{\partial y_i}  g(\vy)_i
    .
\end{equation}
It is well known that by Stein's identity,
under suitable conditions on $g$
(cf. \cite[Section 2]{meyer2000degrees} or \cite[Lemma 3.6]{tsybakov2009introduction}),
we have
\begin{equation}
    \sigma^2 \Evmu D_g(\vy) = \Evmu[\vxi^Tg(\vy)].
    \label{eq:stein}
\end{equation}
The divergence of the estimator $\ls(\convexs)=\Pi_{\convexs}(\vy)$ is given in \cite[Proposition 2.7]{chen2015degrees}
(see also \cite{meyer2000degrees}).
Namely, we have the following result.
\begin{prop}[\cite{meyer2000degrees,chen2015degrees}]
    \label{prop:sen}
    If $g(\cdot)=\Pi_{\convexs}(\cdot)$ is the projection onto the cone of convex sequences,
    then \eqref{eq:stein} holds and we have
    \begin{equation}
        D_g(\vy) = \hat q + 1
    \end{equation}
    almost surely,
    where $\hat q = q(\ls(\convexs))$.
\end{prop}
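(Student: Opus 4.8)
The plan is to split the statement into two independent pieces: the applicability of Stein's identity \eqref{eq:stein} to $g=\Pi_{\convexs}$, which is a general regularity fact, and the closed form $D_g(\vy)=\hat q+1$, which rests entirely on the polyhedral structure of $\convexs$. For the first piece, I would use that the metric projection onto a closed convex set is $1$-Lipschitz, so $g$ is globally Lipschitz; moreover $\Pi_{\convexs}$ is piecewise linear (see below), hence differentiable off a Lebesgue-null set, with $\bigl|\tfrac{\partial}{\partial y_i}g(\vy)_i\bigr|\le 1$, and it has linear growth $\euclidnorm{g(\vy)}\le\euclidnorm{g(\vzero)}+\euclidnorm{\vy}$. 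These are exactly the integrability conditions required by the versions of Stein's lemma quoted in the paper (\cite[Section 2]{meyer2000degrees}, \cite[Lemma 3.6]{tsybakov2009introduction}), so \eqref{eq:stein} holds.

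For the divergence itself, I would decompose $\R^n$ along the faces of the polyhedral cone $\convexs$. For a face $F$, let $N_F$ be the normal cone of $\convexs$ at the relative-interior points of $F$; then $\mathrm{span}(F)\perp\mathrm{span}(N_F)$ and $\dim F+\dim N_F=n$, so $\mathrm{span}(N_F)=\mathrm{span}(F)^\perp$. The finitely many cells $\mathrm{relint}(F)+\mathrm{relint}(N_F)$ are open, pairwise disjoint, and their union is $\R^n$ minus a finite union of lower-dimensional polyhedral pieces, hence a Lebesgue-null set. On the cell indexed by $F$, writing $\vy=\vy_1+\vy_2$ with $\vy_1\in\mathrm{relint}(F)$ and $\vy_2\in\mathrm{relint}(N_F)=N_{\convexs}(\vy_1)$, the projection characterization gives $\Pi_{\convexs}(\vy)=\vy_1=P_{\mathrm{span}(F)}(\vy)$, the orthogonal projection onto $\mathrm{span}(F)$. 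Thus on a neighborhood of such a $\vy$ the map $g$ equals the linear map $P_{\mathrm{span}(F)}$ and is differentiable there with
\begin{equation*}
    \sum_{i=1}^n \frac{\partial}{\partial y_i} g(\vy)_i = \mathrm{trace}\bigl(P_{\mathrm{span}(F)}\bigr) = \dim F .
\end{equation*}
Consequently, for Lebesgue-almost-every $\vy$ (hence $\mathbb P_\vmu$-almost surely) the divergence of $\Pi_{\convexs}$ at $\vy$ equals $\dim F(\vy)$, where $F(\vy)$ is the face whose relative interior contains $\Pi_{\convexs}(\vy)$; this is exactly the random variable $V_{\convexs}$ of \eqref{eq:def-V_K}.

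It then remains to express $\dim F(\vy)$ through $\hat q=q(\ls(\convexs))$, which I would do via the analogue of \Cref{prop:faces-increasing} for $\convexs$. Since $\convexs$ is cut out by the $n-2$ inequalities $2u_i\le u_{i-1}+u_{i+1}$, $i=2,\dots,n-1$, whose defining vectors $2\ve_i-\ve_{i-1}-\ve_{i+1}$ are linearly independent, every face of $\convexs$ has the form $F(T)=\{\vu\in\convexs: 2u_i=u_{i-1}+u_{i+1}\text{ for all }i\in T\}$ for some $T\subseteq\{2,\dots,n-1\}$, with $\dim F(T)=n-|T|$ (a convex sequence in $F(T)$ is determined by its first value, its initial slope, and its $n-2-|T|$ admissible slope increments). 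If $\Pi_{\convexs}(\vy)\in\mathrm{relint}(F(\vy))$, its active set of inequalities is precisely $T$ while the number of strict ones is $\hat q-1$, so $|T|=(n-2)-(\hat q-1)=n-1-\hat q$ and $\dim F(\vy)=n-|T|=\hat q+1$. Combined with the previous step this gives $D_g(\vy)=\hat q+1$ almost surely.

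The main obstacle is the structural claim of the second paragraph: proving carefully that the cells $\mathrm{relint}(F)+\mathrm{relint}(N_F)$ cover $\R^n$ up to a null set and that $\Pi_{\convexs}$ genuinely coincides with the linear map $P_{\mathrm{span}(F)}$ on the interior of each cell (so that its weak divergence equals $\dim F$ a.e.). This is the standard fact underlying the very definition of the intrinsic volumes $\nu_k$, but it requires some care with relative interiors and normal cones; alternatively one may invoke \cite{meyer2000degrees,chen2015degrees} for it directly. The applicability of Stein's identity and the face-dimension count are then routine.
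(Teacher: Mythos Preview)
The paper does not supply a proof of this proposition; it is stated as a result quoted from \cite{meyer2000degrees,chen2015degrees}. Your proposal is correct and is essentially the standard argument behind those references: the projection onto a polyhedral cone is piecewise linear, locally equal to the orthogonal projection onto $\mathrm{span}(F)$ on the open cell $\mathrm{relint}(F)+\mathrm{relint}(N_F)$, so its divergence equals $\dim F$ almost everywhere; the face-dimension count $\dim F=\hat q+1$ for $\convexs$ is exactly the computation in \cite[Proposition~2.7]{chen2015degrees}. Your verification that the $n-2$ defining vectors $2\ve_i-\ve_{i-1}-\ve_{i+1}$ are linearly independent, so that $\dim F(T)=n-|T|$ and hence $\dim F(\vy)=\hat q+1$, is accurate, and the Lipschitz/piecewise-linear regularity you invoke is precisely what is used in \cite{meyer2000degrees} to justify Stein's identity. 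There is nothing to compare against in the paper itself beyond the citation.
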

This result can be used to bound from above the expected radius of the statistic $\radiusConv$.

\begin{thm}
    \label{thm:minimax-q-E}
    Let $\vmu\in\convexs$. Then
    \begin{equation}
        \Evmu [\hat q] \le 10 q(\vmu) \log\frac{en}{q(\vmu)} -1.
        \label{eq:expected-q-hat}
    \end{equation}
    Consequently, for all $\alpha\in(0,1)$, the statistic \eqref{eq:def-r-conv} satisfies
    \begin{equation}
        \Evmu[ \radiusConv ]
        \le
        \frac{\sigma^2 q(\vmu) \polylog(n,1/\alpha)}{n}.
        \label{eq:expected-radius-conv}
    \end{equation}
    where $\polylog(n,1/\alpha) = 10 \log(en) (20 + 40 \log(n) + 10 \log(1/\alpha))$.
\end{thm}
\begin{proof}
    By \Cref{prop:sen} and \eqref{eq:stein}, we have
    \begin{equation}
        \sigma^2 \Evmu [1 + \hat q]
        =
        \Evmu[ \vxi^T \Pi_{\convexs}(\vy) ]
        =
        \Evmu[ \vxi^T (\Pi_{\convexs}(\vy) - \vmu) ].
    \end{equation}
    By the Cauchy-Schwarz inequality, we have
    \begin{align}
        \sigma^2 \Evmu [1 + \hat q]
        &\le
        \Evmu^{1/2} \left[ \left( \frac{\vxi^T (\Pi_{\convexs}(\vy) - \vmu)}{\euclidnorm{\Pi_{\convexs}(\vy) - \vmu}} \right)^2 \right]
        \Evmu^{1/2} \euclidnorms{\Pi_{\convexs}(\vy) - \vmu},
        \\
        &\le \sigma \sqrt{ \delta(\mathcal T_{\vmu}) }
        \Evmu^{1/2} \euclidnorms{\Pi_{\convexs}(\vy) - \vmu}.
    \end{align}
    Using \Cref{prop:convexs} completes the proof of \eqref{eq:expected-q-hat}.
    Inequality \eqref{eq:expected-radius-conv}
    is a direct consequence of \eqref{eq:expected-q-hat}
    and of the definition of $\radiusConv$.
\end{proof}
The above result is different from \Cref{thm:minimax-k}
in isotonic regression.
\Cref{thm:minimax-k} controls both the expectation and the deviations of $\hat k$.
In this section,
\Cref{thm:minimax-q-E} only controls the expectation of $\hat q$.
This comes from the use of Stein's identity in the proof of \Cref{thm:minimax-q-E},
which yields a result only in expectation.

The arguments used to prove \Cref{thm:minimax-k}
are based on the concentration properties of the intrinsic volumes of cones,
while the proof of \Cref{thm:minimax-q-E} relies 
on Stein's identity and \Cref{prop:sen}.
Thus, we have presented two methods to bound from above
the expected diameter of the confidence sets constructed in the present paper.

\begin{cor}
    Let $J=\{1,...,n-1\}$ and define the collection of models
    $(E_k)_{k\in J} = \left(\convexs(k)\right)_{k=1,...,n-1}$.
    The random variable $\radiusConv$ defined in \eqref{eq:def-r-conv}
    satisfies 
    \eqref{eq:confidence-conv-honest}
    and \eqref{goal:minimax-ball-E} with $\hat r$ replaced by $\radiusConv$.
    Thus, the ball centered at $\ls(\convexs)$ of radius $\sqrt{\radiusConv}$
    is an honest confidence set,
    which is adaptive
    in expectation
    with respect to the models $\left(\convexs(k)\right)_{k=1,...,n-1}$.
\end{cor}

\section{Non-convexity: Adaptive confidence sets for unimodal sequences}
\label{s:unimodal}

Let $m\in\{1,...,n\}$.
A sequence $\vu\in\R^n$
is unimodal with mode at position $m$
if and only if $\vu_{\{1,...,m\}}$ is non-increasing
and $\vu_{\{m,...,n\}}$ is nondecreasing--in other words, $\vu$ belongs to the set
\begin{equation}
    K_m \coloneqq \{ \vu=(u_1,...,u_n)^T\in\R^n: u_1 \ge ... \ge u_m \le u_{m-1} \le ... \le u_n \}.
    \label{eq:def-Km-unimodal}
\end{equation}
Next, by taking the union over all possible locations for the mode, we define the set of all unimodal sequences as
\begin{equation}
    \mathcal U \coloneqq \cup_{m=1,...,n} K_m
    .
    \label{eq:def-U-unimodal}
\end{equation}
The set $\mathcal U$ is non-convex for $n\ge 3$.

Recall that if $\vu\in\increasings\cup\decreasings$ is monotone, $k(\vu)=|\{u_i, i=1,...,n\}$ is the smallest positive integer $k$ such that
$\vu$ is piecewise constant constant with $k$ pieces.
We extend the function $k$ to the set of unimodal sequences by setting
\begin{multline}
\forall \vu\in\mathcal U, \quad k(\vu) = \min\{ k\ge 1: 
\exists \text{ partition } (T_1,...,T_k) : \\ \vu_{T_j} \text{ is constant for all } j=1,...,k \},
\end{multline}
where the partition $(T_1,...,T_k)$ is a partition of $\{1,...,n\}$ with $\max T_j < \min T_{j+1}$ for all $j=1,...,k-1$.
A unimodal sequence $\vu\in\mathcal U$ has $k(\vu)-1$ jumps.
Similarly to isotonic regression, define the models
\begin{equation}
    \mathcal U(k) = \{ \vu\in\mathcal U:  k(\vu) \le k \},
    \qquad
    k=1,...,n.
    \label{models-unimodal}
\end{equation}
These sets define models of increasing complexity since
\begin{equation}
    \mathcal U(1)
    \subset
    \mathcal U(2)
    \subset
    ...
    \subset
    \mathcal U(n)
    =
    \mathcal U
\end{equation}
It is known 
that there exist numerical constants $c,c',c''$ such that for all $\alpha \le c$,
\begin{equation}
    c'\sigma^2k / n
    \le R^*_\alpha(\mathcal U(k)) 
    \le
    c''\sigma^2(k \log(en/k) + \log(n/\alpha) ) / n.
\end{equation}
Indeed, the lower bound is a consequence of \eqref{eq:minimax-loss-increasings}
and the inclusion $R^*_\alpha(\increasings(k))\subset R^*_\alpha(\mathcal U(k))$,
while the upper bound is proved in 
\cite{bellec2015shape,chatterjee2015adaptive,flammation2016seriation},
see \cite[Appendix C ]{bellec2015shape} for explicit constants.
Thus, for $\alpha>0$ small enough, the quantity
$R^*_\alpha(\increasings(k))$ is of order $k\sigma^2/n$,
up to logarithmic factors in $n$ and $1/\alpha$.
Similarly,
the minimax risk over the sets $\increasings(k)$ satisfies
\begin{equation}
    c'\sigma^2 k /n
    \le
    R^*_{\E}(\mathcal U(k)) 
    \le
    \sup_{\vmu\in\mathcal U(k)} \Evmu \scalednorms{\ls(\mathcal U) - \vmu} 
    \le
    c'' \sigma^2 k \log(en/k) / n,
\end{equation}
for some numerical constants $c',c''>0$ for instance by integrating the bounds of \cite[Appendix C]{bellec2015shape},
where $\ls(\mathcal U)\in \argmin_{\vu\in\mathcal U} \|\vu - \vy\|$ is the non-convex unimodal Least-Squares estimator
(if the non-convex optimization problem has several solutions, we break ties arbitrarily).

The results below show that, using the number of constant pieces $\hat k=p(\ls(\mathcal U))$ of the unimodal Least-Squares estimator,
one can construct adaptive confidence sets with respect to the collection of models \eqref{models-unimodal}.

\begin{thm}
    \label{thm:confidence-uni}
    Let $\hat p = k(\ls(\mathcal U))$ be the smallest integer such that $\ls(\mathcal U)$ is piecewise constant on $\hat p$ contiguous pieces.
    For all $\alpha\in(0,1)$ and all $\mu\in\mathcal U$, the statistic 
    $$\hat r_{uni}\coloneqq \frac{\sigma^2}{n} 
    \left(\sqrt{(\hat p+3) \log\left(\frac{en}{\hat p+3}\right)} + \sqrt{2\log\left(\frac{n^2}{\alpha}\right) + 2(\hat p + 3) \log\left(\frac{en}{\hat p + 3} \right)  }\right)^2
        $$
    satisfies
    $\scalednorms{\ls(\mathcal U)-\vmu} \le \hat r_{uni}$
    with probability at least $1-\alpha$.
\end{thm}

\begin{proof}
    Let $\hmu=\ls(\mathcal U)$ for brevity.
    By the optimality conditions of $\hmu$, we have $\euclidnorms{\hmu-\vmu}\le 2\vxi^T(\hmu - \vmu)$
    and 
    \begin{equation}
        \euclidnorm{\hmu-\vmu}\le 2\vxi^T\hat\vtheta \qquad \text{ where } \qquad \hat\vtheta = (\hmu-\vmu)/\euclidnorm{\hmu-\vmu}.
        \label{unimodal-intermediate}
    \end{equation}
    We split $\{1,...,n\}$ into a partition $(L,C,R)$ as follows: $L$ is the largest set of indices of the form
    $\{1,2,...,|L|\}$ such that both $\hmu_L$ and $\vmu_L$ are non-increasing,
    $R$ is the largest set of indices of the form $\{n-|R|+1,n-|R|+2...,n\}$ such that $\hmu_R$ and $\vmu_R$ are both non-decreasing,
    and $C=\{1,...,n\}\setminus(R\cup L)$ contains the remaining central indices, where by definition, $\hmu_C-\vmu_C$ is either increasing or decreasing.
    Let $\hat k_L$ be the number of constant pieces of $\hmu$ on $L$, and let 
    $(\hat S_1,...\hat S_{\hat k_L})$ be a partition of $L$ such that $\hmu_L$ is constant on each $\hat S_j$.
    Similarly, let $\hat k_R$ be the number of constant pieces of $\hmu$ on $R$
    and $(\hat T_1,...,\hat T_{\hat k_R})$ be a partition of $R$ such that $\hmu_R$ is constant on each $\hat T_j$.
    Then both $\hmu-\vmu$ and $\hat\vtheta$ belong to either one of the cones
    $$
        \mathcal{\hat D_+} \coloneqq 
        \left(\mathcal S^\uparrow_{|\hat S_1|}
        \times \dots \times \mathcal S^\uparrow_{|\hat S_{\hat k_L}|}\right)
        \times \mathcal S^\uparrow_{|C|} \times
        \left(\mathcal S^\downarrow_{|\hat T_1|}
        \times \dots \times \mathcal S^\downarrow_{|\hat T_{\hat k_R}|}\right).
    $$
    or
    $$
        \mathcal{\hat D_-} \coloneqq 
        \left(\mathcal S^\uparrow_{|\hat S_1|}
        \times \dots \times \mathcal S^\uparrow_{|\hat S_{\hat k_L}|}\right)
        \times \mathcal S^\downarrow_{|C|} \times
        \left(\mathcal S^\downarrow_{|\hat T_1|}
        \times \dots \times \mathcal S^\downarrow_{|\hat T_{\hat k_R}|}\right),
    $$
    the only difference being the direction on the central indices in $C$.
    We now argue similarly to \eqref{work-for-increasings} as follows.
    If the cone $\mathcal C$ is nonrandom, of the form 
    \begin{equation}
        \label{form-cones}
        \mathcal C=
        \mathcal S^\uparrow_{n_1}\times\dots\times\mathcal S^\uparrow_{n_k}
        \times
        \mathcal S^\downarrow_{m_1}\times\dots\times\mathcal S^\downarrow_{m_l}
    \end{equation}
    for positive integers $n_1,...,n_k,m_1,...,m_l$ such that $n_1+...+n_k+m_1+...+m_l=n$, then by the Gaussian concentration theorem,
    \eqref{gaussian-concentration} holds with probability at least $1-e^{-x}$.
    Furthermore, $\E\sup_{\vtheta\in \mathcal C:|\vtheta|_2=1} \vxi^T\vtheta \le \sigma \delta(\mathcal C)^{1/2}$ by Jensen's inequality,
    and $\delta(\mathcal C) = \sum_{j=1}^k \log(en_j) + \sum_{j=1}^l \log(em_j) \le (k+l) \log(en/(k+l)$
    thanks to \eqref{eq:delta-increasing} and the fact that the statistical
    dimension of a product of cones is equal to the sum of the statistical dimensions
    (cf. \cite[Proposition 3.1]{amelunxen2014living}).

    Let $s=1,...,n$ be fixed. There are fewer than $n {n-1\choose s }$
    cones of the form \eqref{form-cones} with $k+l=s$.
    By the union bound and inequality $\log {n-1\choose s} \le s \log(en/s)$, we have with probability at least $1-e^{-x}$
    the bound
    $$\sup_{\mathcal C}\Big(\sup_{\vtheta\in\mathcal C :\euclidnorm{\vtheta}=1}
    \vxi^T\vtheta \Big) \le \sigma \sqrt{s \log(en/s)} + \sigma \sqrt{2(x +\log n +  s \log(en/s) )}$$
    where the supremum is taken over all cones $\mathcal C$ of the form \eqref{form-cones}
    with $k+l=s$.
    Finally, we apply the union bound over all $s\in\{1,...,n\}$ and set $x=\log(n/\alpha)$.
    We have established that with probability at least $1-\alpha$, 
    $$
    \sup_{\vtheta\in \mathcal{\hat D_+} \cup \mathcal{\hat D_-} : |\vtheta|_2=1 }
    \vxi^T\vtheta
    \le
    \sigma \sqrt{\hat s \log(en/\hat s)} + \sigma \sqrt{2(\log(n^2/\alpha) +  \hat s \log(en/\hat s) )}.$$
    where $\hat s = \hat k_L + \hat k_R + 1 $.
    The proof is completed by combining this bound with \eqref{unimodal-intermediate},
    and observing that $\hat s = \hat k_R + 1 + \hat k_L \le \hat p + 3$.
\end{proof}

Finally, the following result shows that the confidence set of the previous theorem has optimal radius
up to logarithm factors.

\begin{thm}
    \label{thm:minimax-uni}
    If $\vmu\in\mathcal U$ then
    $$\hat p \coloneqq k(\ls(\mathcal U)) \le 4 k(\vmu)\log(en/k(\vmu))+14\log(2n/\gamma)$$
    with probability at least $1-\gamma$.
    Furthermore, $\E[\hat p]\le 4 k(\vmu)\log(en/k(\vmu))+14\log(2 e n )$.
\end{thm}

\begin{proof}
    If $\hmu=\ls(\mathcal U)$ is a unimodal fit and $\hat m$ is the location of its mode, then
    the following facts hold true \cite{stout2008unimodal}:
    (a) the value of $\hat\mu_{\hat m}$ of $\hmu$ at $\hat m$ is equal to $y_{\hat m}$,
    (b) $\hmu_{\{1,...,\hat m\}}$ is equal to the isotonic (decreasing) fit of $\vy_{\{1,...,\hat m\}}$,
    and (c) $\hmu_{\{\hat m,...,n\}}$ is equal to the isotonic (increasing) fit of $\vy_{\{\hat m,...,n\}}$.
    Hence we can bound from above the number of constant pieces of $\hmu$ by the number of constant pieces
    of two isotonic fits, one on $\{1,...,\hat m\}$ and the other on $\{\hat m,...,n\}$.

    If $m$ is a deterministic mode location, and $\vmu_{\{1,...,m\}}$ has $k(\vmu_{\{1,...,m\}})$ pieces,
    by \Cref{thm:minimax-k} the isotonic (decreasing) fit of $\vy_{\{1,...,m}\}$
    has at most $$2k(\vmu_{\{1,...,m\} })\log(em/k(\vmu_{\{1,...,m\} }))+7\log(1/\gamma)$$
    constant pieces with probability $1-\gamma$. Similarly, with at least probability $1-\gamma$,
    the isotonic (increasing) fit of $\vy_{\{m,...,n\}}$ has 
    at most 
    $$2k(\vmu_{\{m,...,n\} })\log(e(n-m+1)/k(\vmu_{\{m,...,n\} }))+7\log(1/\gamma)$$
    constant pieces with probability at lesat $1-\gamma$.
    By the union bound, the two previous sentences hold uniformly over all possible modes $m=1,...,n$
    with probability at least $1-2n\gamma$.

    Hence, with probability at least $1-2n\gamma$, the number of constant pieces of the unimodal
    least-squares $\hmu$ is bounded from above by
    $$2k\left(\vmu_{\{1,...,\hat m\} }\right)\log\left(\frac{e\hat m}{k(\vmu_{\{1,...,\hat m\} })}\right)+ 
        2k\left(\vmu_{\{\hat m,...,n\} }\right)\log\left(\frac{e(n-\hat m+1)}{k(\vmu_{\{\hat m,...,n\} })}\right)
        +14\log\frac 1 \gamma .
    $$
    We first use that $\hat m\le n$ and $(n-\hat m + 1)\le n$ to bound from above the numerator inside the logarithms.
    Next, we use $k(\vmu_{\{1,...,\hat m\} }) \le k(\vmu)$ and the fact that $x \log(en/x)$ is increasing on $[1,n]$
    to conclude that the previous display is bounded from above by
    $4 k(\vmu)\log(en/k(\vmu))+14\log(1/\gamma)$.
    The result in expectation is obtained by integration, using the identity $\E[Z]=\int_0^\infty \mathbb P(Z>t)dt$
    for every non-negative random variable $Z$.

\end{proof}

\section{Concluding remarks}

We have provided a simple construction of honest and adaptive confidence sets
for isotonic, convex and unimodal regression.
Our construction reveals that the complexity of the Least-Squares estimator
in these problems, e.g. the number of jumps of the isotonic Least-Squares
or the number of changes of slope of the convex Least-Squares,
can be used to bound from above the error of the estimator
(cf. \Cref{thm:confidence-inc,thm:confidence-conv,thm:confidence-uni}).
Furthermore, the complexity of the Least-Squares estimator is these problems
is not larger, up to logarithmic factors, than the complexity of the true mean vector
(cf. \Cref{thm:minimax-k,thm:minimax-q-E,thm:minimax-uni}).

The construction of honest confidence sets in \Cref{thm:confidence-inc,thm:confidence-conv,thm:confidence-uni}
relies on a careful application of the Gaussian concentration theorem
combined with union bounds and upper bounds on statistical dimensions of tangent cones.
Such techniques can readily be extended to the setting of
\cite{chatterjee2015matrix,guntuboyina2017spatial,han2017isotonic},
where bounds on the statistical dimensions of tangent cones are readily available.
However, the techniques used in \Cref{thm:minimax-k,thm:minimax-q-E,thm:minimax-uni}
to control the size of such confidence sets do not directly extend to the settings
considered in \cite{chatterjee2015matrix,guntuboyina2017spatial,han2017isotonic}
and it is unclear at this point how to control adaptively the radius of the confidence sets
in these settings.

\appendix

\section{Nondecreasing sequences with bounded total variation}
\label{s:uniform-inc}

Let $V>0$.
If the unknown parameter $\vmu$ satisfies
$\mu_n - \mu_1\le V$, the risk of the Least Squares estimator 
satisfy \cite[(28)]{zhang2002risk}
\begin{equation}
    \Evmu \scalednorms{\ls(\increasings) - \vmu}
    \le
    \sigma^2
    \kappa^2
    \left(
        \left(
            \frac{V}{\sigma n}
        \right)^{2/3}
        + \frac{\log(e n)}{n}
    \right),
\end{equation}
where $\kappa \le 3.6$.
Thus, an explicit constant is readily available \cite[(2.8)]{zhang2002risk}.

In this section, we explain how to construct confidence sets
with diameter of the same order as the right hand side of the previous display.
We proceed as follows.

The function $f:\R^n \rightarrow \R^n$ defined by $f(\vv) = \scalednorm{\Pi_\increasings(\vmu + \sigma \vv) - \vmu}$
is Lipschitz with coefficient $\sigma/\sqrt n$
as for all $\vv,\vv'\in\R^n$,
\begin{equation}
    |f(\vv) - f(\vv')|
    \le
    \scalednorm{\Pi_\increasings(\vmu + \sigma \vv) - \Pi_\increasings(\vmu +\sigma \vv')}
    \le
    \sigma \scalednorm{\vv - \vv'}
    = (\sigma/\sqrt n) \euclidnorm{\vv - \vv'}.
    \label{eq:risk-zhang}
\end{equation}
By the Gaussian concentration inequality \cite[Theorem 5.6]{boucheron2013concentration},
the following holds with probability greater than $1-\alpha$
\begin{equation}
    \scalednorm{\ls(\increasings)-\vmu}
    \le
    \Evmu \scalednorm{\ls(\increasings)-\vmu}
    + \sigma \sqrt{\frac{2\log(1/\alpha)}{n}}.
\end{equation}
Using that $(a+b)^2\le 2a^2+2b^2$,
we obtain the following for all $\alpha\in(0,1)$:
If $\vmu\in\increasings$
and $\mu_n - \mu_1 \le V$, then
\begin{equation}
    \scalednorms{\ls(\increasings)-\vmu}
    \le
    2\kappa^2 \sigma^2
        \left(
            \frac{V}{\sigma n}
        \right)^{2/3}
    +  \frac{
        2\kappa^2\sigma^2\log(e n)
        + 4\sigma^2 \log(1/\alpha)
    }{n}
\end{equation}
with probability greater than $1-\alpha$.

Let $V_\vmu=\mu_n - \mu_1$ and $\hat V = y_n - y_1$. 
The random variable $\hat V - V_\vmu$ is centered Gaussian with variance $2\sigma^2$, so
that
\begin{equation}
    V_\vmu \le \hat V +  2 \sigma \sqrt{\log(1/\alpha)}
\end{equation}
with probability greater than $1-\alpha$.
Thus, we have established the following.
\begin{prop}
    Let $\vmu\in\increasings$.
    Define the statistic $\radiusUniform$ by
    \begin{equation}
        \sqrt{\radiusUniform}
        =
        2\kappa^2 \sigma^2
            \left(
                \frac{\hat V + 2 \sigma\sqrt{\log(1/\alpha)}}{\sigma n}
            \right)^{2/3}
        +  \frac{
            2\kappa^2\sigma^2\log(e n)
            + 4\sigma^2 \log(1/\alpha)
        }{n}
    \end{equation}
    where $\kappa\le 3.6$ is the constant from \cite{zhang2002risk} that appears in \eqref{eq:risk-zhang}.
    Then we have $\scalednorms{\ls(\increasings) - \vmu}\le\radiusUniform$
    with probability greater than $1-\alpha$.
\end{prop}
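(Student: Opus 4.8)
The plan is to stitch together three ingredients, two of which are already assembled in the discussion preceding the statement: Zhang's $(V/(\sigma n))^{2/3}$ risk bound together with the Gaussian concentration inequality applied to the $\sigma/\sqrt n$-Lipschitz map $\vv\mapsto\scalednorm{\Pi_\increasings(\vmu+\sigma\vv)-\vmu}$ (see \eqref{eq:risk-zhang}), and a data-driven control of the unknown total variation $V_\vmu\coloneqq\mu_n-\mu_1$. For $V>0$, write
\[
\Phi(V)\coloneqq 2\kappa^2\sigma^2\Big(\frac{V}{\sigma n}\Big)^{2/3}+\frac{2\kappa^2\sigma^2\log(en)+4\sigma^2\log(1/\alpha)}{n},
\]
which is nondecreasing in $V$. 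The last display before the statement shows that for every \emph{deterministic} $V>0$, if $\vmu\in\increasings$ and $\mu_n-\mu_1\le V$, then $\scalednorms{\ls(\increasings)-\vmu}\le\Phi(V)$ on an event $A$ with $\mathbb P_\vmu(A)\ge 1-\alpha$.

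First I would apply this with the (admissible, but unknown) choice $V=V_\vmu$, which gives $\scalednorms{\ls(\increasings)-\vmu}\le\Phi(V_\vmu)$ on $A$. Next I would bound $V_\vmu$ by the observable quantity $\hat V=y_n-y_1$: since $\hat V-V_\vmu=\xi_n-\xi_1$ is centered Gaussian with variance $2\sigma^2$, a one-sided Gaussian tail bound gives $V_\vmu\le\hat V+2\sigma\sqrt{\log(1/\alpha)}$ on an event $B$ with $\mathbb P_\vmu(B)\ge 1-\alpha$. On $A\cap B$, the monotonicity of $\Phi$ then yields
\[
\scalednorms{\ls(\increasings)-\vmu}\le\Phi(V_\vmu)\le\Phi\big(\hat V+2\sigma\sqrt{\log(1/\alpha)}\big)=\radiusUniform,
\]
which is the claim. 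The substitution of the random threshold $\hat V+2\sigma\sqrt{\log(1/\alpha)}$ for the deterministic $V$ — legitimate precisely because $\Phi$ is increasing and $B$ forces $V_\vmu$ below that threshold — is the one step that requires a moment's thought; everything else is quotation.

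The only other point to watch is the probability budget: the crude union bound gives $\mathbb P_\vmu(A\cap B)\ge 1-2\alpha$ rather than $1-\alpha$. I would resolve this by running the two sub-arguments at level $\alpha/2$, which merely turns each $\log(1/\alpha)$ into $\log(2/\alpha)\le 1+\log(1/\alpha)$ and is absorbed into the numerical constant $\kappa^2$; alternatively one can obtain the stated constants by a joint deviation bound that treats both $\scalednorm{\ls(\increasings)-\vmu}$ and $\hat V$ as Lipschitz functionals of the single Gaussian vector $\vxi$. I do not expect a genuine obstacle here, since the substantive estimates are imported from \cite{zhang2002risk} and from standard Gaussian concentration, and the proof reduces to the monotone-substitution argument above together with this bookkeeping of the confidence level.
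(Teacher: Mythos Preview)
Your proposal follows exactly the paper's route: the paragraphs preceding the proposition combine Zhang's risk bound with Gaussian concentration for the $\sigma/\sqrt n$-Lipschitz map $\vv\mapsto\scalednorm{\Pi_{\increasings}(\vmu+\sigma\vv)-\vmu}$, then feed in the one-sided Gaussian tail bound $V_\vmu\le \hat V+2\sigma\sqrt{\log(1/\alpha)}$ and use monotonicity of $\Phi$, precisely as you describe. The probability-budget issue you raise is real and the paper does not resolve it any more carefully than you do; indeed the subsequent combined statement (\Cref{thm:combined}) is written with confidence $1-2\alpha$.
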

Furthermore, it is clear that $\hat V \le V_\vmu + 2\sigma\sqrt{\log(1/\gamma)}$
with probability greater than $1-\gamma$ for all $\gamma\in(0,1)$.
\begin{prop}
    Let $\vmu\in\increasings$ and let $V=\mu_n - \mu_1$.
    Then  the statistic  $\radiusUniform$ defined above satisfies
    \begin{equation}
        \radiusUniform
        \le
        2\kappa^2 \sigma^2
            \left(
                \frac{\hat V + 2 \sigma\sqrt{\log(1/(\gamma\alpha))}}{\sigma n}
            \right)^{2/3}
        +  \frac{
            2\kappa^2\sigma^2\log(e n)
            + 4\sigma^2 \log(1/\alpha)
        }{n}
        \label{eq:rhs-radiusUniform}
    \end{equation}
    with probability at least $1-\gamma$ for all $\gamma\in(0,1)$.
\end{prop}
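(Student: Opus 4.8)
The plan is to control the true total variation $V=\mu_n-\mu_1$ — which governs the size of $\radiusUniform$ through the oracle risk bound it is built from, the dominant term $2\kappa^2\sigma^2(V/(\sigma n))^{2/3}$ being a non-decreasing function of $V$ — by the observable quantity $\hat V+2\sigma\sqrt{\log(1/(\gamma\alpha))}$, and then to conclude by monotonicity. The additive term $\tfrac{2\kappa^2\sigma^2\log(en)+4\sigma^2\log(1/\alpha)}{n}$ carries no total-variation dependence and is left untouched, so it suffices to bound the total-variation budget from above with probability at least $1-\gamma$.

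The key probabilistic input is a reverse Gaussian concentration inequality for $\hat V=y_n-y_1$. As noted just above the statement, $\hat V-V=\xi_n-\xi_1$ is centered Gaussian with variance $2\sigma^2$, so the one-sided tail bound gives $\mathbb P(V-\hat V>t)\le\exp(-t^2/(4\sigma^2))$ for every $t>0$. Taking $t=2\sigma\sqrt{\log(1/\gamma)}$ yields
\[
V\le\hat V+2\sigma\sqrt{\log(1/\gamma)}
\]
with probability at least $1-\gamma$. This is exactly the reverse of the inequality $\hat V\le V+2\sigma\sqrt{\log(1/\gamma)}$ recorded before the proposition: here the true total variation is dominated by the empirical one, which is what lets the final bound be written in terms of $\hat V$.

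I would then merge the two confidence budgets through the additivity of the logarithm rather than through the square-root corrections. Because $\log(1/(\gamma\alpha))=\log(1/\gamma)+\log(1/\alpha)\ge\log(1/\gamma)$, one has $\sqrt{\log(1/\gamma)}\le\sqrt{\log(1/(\gamma\alpha))}$, so on the event above
\[
V\le\hat V+2\sigma\sqrt{\log(1/\gamma)}\le\hat V+2\sigma\sqrt{\log(1/(\gamma\alpha))}.
\]
Substituting this bound on the total variation into the increasing map $w\mapsto 2\kappa^2\sigma^2(w/(\sigma n))^{2/3}$, and keeping the additive term unchanged, produces precisely the claimed upper bound on $\radiusUniform$ on an event of probability at least $1-\gamma$, which finishes the argument.

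The main obstacle is conceptual rather than computational, and it is exactly where a naive argument fails: one must use the Gaussian tail in the \emph{reverse} direction, bounding the true $V$ from above by the empirical $\hat V$, rather than bounding $\hat V$ by $V$ as in the preceding display. A secondary subtlety is that the two fluctuation budgets, at levels $\gamma$ and $\alpha$, must be combined inside a single logarithm via $\log(1/\gamma)+\log(1/\alpha)=\log(1/(\gamma\alpha))$; adding the two corrections $2\sigma\sqrt{\log(1/\gamma)}$ and $2\sigma\sqrt{\log(1/\alpha)}$ separately would overshoot, since $\sqrt a+\sqrt b\ge\sqrt{a+b}$, and would ruin the constant $2$ in front of $\sigma\sqrt{\log(1/(\gamma\alpha))}$. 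Keeping the correction within one logarithm is precisely what makes the stated constant correct.
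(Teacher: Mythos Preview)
Your argument rests on a misreading of $\radiusUniform$. By its definition in the preceding proposition,
\[
\sqrt{\radiusUniform}=2\kappa^2\sigma^2\Bigl(\frac{\hat V+2\sigma\sqrt{\log(1/\alpha)}}{\sigma n}\Bigr)^{2/3}+\frac{2\kappa^2\sigma^2\log(en)+4\sigma^2\log(1/\alpha)}{n},
\]
so $\radiusUniform$ is a \emph{statistic}: it is built from the empirical $\hat V=y_n-y_1$, not from the unknown $V=\mu_n-\mu_1$. There is no $V$ inside $\radiusUniform$ to substitute for, so your plan of ``substituting this bound on the total variation into the increasing map $w\mapsto 2\kappa^2\sigma^2(w/(\sigma n))^{2/3}$'' has no target.

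Consequently, the relevant Gaussian-tail direction is exactly the one you dismiss as ``naive.'' The paper's proof is the single sentence immediately preceding the proposition: $\hat V\le V_\vmu+2\sigma\sqrt{\log(1/\gamma)}$ with probability at least $1-\gamma$. One inserts this upper bound on $\hat V$ into the definition of $\radiusUniform$ and uses monotonicity of $w\mapsto(w/(\sigma n))^{2/3}$ to obtain a bound depending on the \emph{true} $V$. Your reversal---bounding $V$ from above by $\hat V$---would replace a data-driven quantity by a bound involving that same data-driven quantity, which is not the content of the proposition. (The right-hand side of the displayed inequality appears to carry a typo, $\hat V$ where $V$ is intended; this is what the introduction of $V=\mu_n-\mu_1$ in the hypotheses, and the subsequent use of the bound for adaptivity in the theorem that follows, make clear.)
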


\begin{thm}
    \label{thm:combined}
    Let $\vmu\in\increasings$.
    The statistic $\min(\radiusInc, \radiusUniform)$ 
    satisfies 
    \begin{equation}
        \scalednorms{\ls(\increasings)-\vmu} \le \min(\radiusInc, \radiusUniform)
    \end{equation}
    with probability at least $1- 2 \alpha$.
    Furthermore, for all $\gamma\in(0,1)$,
    the statistic $\min(\radiusInc, \radiusUniform)$ 
    is bounded from above 
    with probability at least $1-2\gamma$,
    by the minimum of the right hand side of \eqref{eq:minimax-k-proba}
    and the right hand side of \eqref{eq:rhs-radiusUniform}.
\end{thm}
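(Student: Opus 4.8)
The plan is to obtain both assertions by a pair of union bounds over the two previously established confidence statements, the one driven by the intrinsic-volume concentration of \Cref{thm:confidence-inc}/\Cref{thm:minimax-k} and the one driven by Gaussian Lipschitz concentration in \Cref{s:uniform-inc}. For the first assertion, I would invoke \Cref{thm:confidence-inc}, which gives $\scalednorms{\ls(\increasings)-\vmu}\le\radiusInc$ on an event of probability at least $1-\alpha$, together with the proposition defining $\radiusUniform$, which gives $\scalednorms{\ls(\increasings)-\vmu}\le\radiusUniform$ on an event of probability at least $1-\alpha$. On the intersection of these two events, whose complement has probability at most $2\alpha$ by the union bound, the loss lies below both $\radiusInc$ and $\radiusUniform$, hence below their minimum. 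This yields $\scalednorms{\ls(\increasings)-\vmu}\le\min(\radiusInc,\radiusUniform)$ with probability at least $1-2\alpha$.

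For the second assertion, I would control $\radiusInc$ and $\radiusUniform$ separately and again take a union bound. By \eqref{eq:minimax-k-proba} in \Cref{thm:minimax-k}, the bound $\hat k\le 2k(\vmu)\log(en/k(\vmu))+7\log(1/\gamma)$ holds with probability at least $1-\gamma$; substituting it into the definition \eqref{eq:def-r-inc} of $\radiusInc$ gives $\radiusInc\le\frac{\sigma^2(2k(\vmu)\log(en/k(\vmu))+7\log(1/\gamma))(2+22\log n+10\log(1/\alpha))}{n}$ on that event, which is the quantity the statement abbreviates as "the right hand side of \eqref{eq:minimax-k-proba}." On the other hand, the last proposition of \Cref{s:uniform-inc} gives $\radiusUniform$ below the right hand side of \eqref{eq:rhs-radiusUniform} with probability at least $1-\gamma$. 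Intersecting these two events leaves a set of probability at least $1-2\gamma$ on which both bounds hold simultaneously, so that $\min(\radiusInc,\radiusUniform)$ is at most the minimum of the two right hand sides, as claimed.

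There is no substantial obstacle here: the statement is essentially a packaging of \Cref{thm:confidence-inc}, \Cref{thm:minimax-k}, and the total-variation propositions. The only point requiring a little care is the bookkeeping of failure probabilities — in each of the two parts the two events being intersected must come from the two genuinely distinct concentration arguments, with no event reused, so that the union bound is legitimately at level $2\alpha$ (respectively $2\gamma$) rather than at a worse level. Once this is checked, both displays follow immediately.
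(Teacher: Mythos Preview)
Your proposal is correct and is exactly the argument the paper has in mind: the theorem is stated without proof in the paper because it is simply a union-bound combination of \Cref{thm:confidence-inc}, the proposition establishing coverage of $\radiusUniform$, \Cref{thm:minimax-k}, and the proposition bounding $\radiusUniform$ from above. Your bookkeeping of the $2\alpha$ and $2\gamma$ failure probabilities, and your reading of ``the right hand side of \eqref{eq:minimax-k-proba}'' as the resulting bound on $\radiusInc$ after substituting the bound on $\hat k$ into \eqref{eq:def-r-inc}, are both in line with the paper's intent.
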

For all $V\ge\sigma$ and all $k=1,...,n$, define the class
\begin{equation}
    \increasings(k,V) \coloneqq \{ \vv=(v_1,...,v_n)^T
    \in\increasings: \quad
    k(\vv)\le k \text{ and } v_n - v_1\le V 
    \}.
\end{equation}
For small enough $\alpha>0$,
the quantity $R^*_\alpha(\increasings(k,V))$ defined in \eqref{eq:minimax-loss}
is greater than 
\begin{equation}
    c \sigma^2 
    \min
    \left(
        \left(
            \frac{V}{\sigma n}
        \right)^{2/3}
        ,
        \frac k n
    \right)
\end{equation}
for some absolute constant $c>0$, cf. \cite[Proposition 4]{bellec2015shape}.
Thus, the statistic $\min(\radiusInc, \radiusUniform)$ of \Cref{thm:combined}
induces an honest confidence ball centered at the Least Squares estimator,
and this confidence ball is adaptive in probability
for the collection of models
\begin{equation}
    (\increasings(k,V))_{k\in\{1,...,n\}, V\in [\sigma,+\infty)}.
\end{equation}

\section{Technical Lemma}

\begin{lemma}
    \label{lemma:number-of-jumps}
    In the present Lemma, all quantities are deterministic.
    Let $a \in [-\infty,+\infty)$ and $b\in(-\infty, +\infty]$ such that $a\le b$. 
    Let $\vy\in\R^n$.
    Define $\vtheta$ and $\vtheta^*$ as the unique solutions of the minimization problems
    \begin{align}
        \vtheta^* \in
        \argmin_{\vv\in\increasings} \euclidnorms{\vy - \vv},
        \label{eq:problem-vtheta-star}
        \\
        \vtheta \in
        \argmin_{\vv\in\increasings(a,b)} \euclidnorms{\vy - \vv}
        \label{eq:problem-vtheta}
    \end{align}
    where
    $
    \increasings(a,b) \coloneqq \{\vv=(v_1,...,v_n)^T \in \increasings: a\le v_1, v_n \le b\}
    $.
    Then $k(\vtheta) \le k(\vtheta^*)$.
\end{lemma}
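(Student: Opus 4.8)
The plan is to show that adding the two box constraints merely truncates the unconstrained isotonic fit: I claim that $\vtheta = \Pi_{[a,b]^n}(\vtheta^*)$, where $\Pi_{[a,b]^n}$ acts coordinatewise through the nondecreasing scalar map $\phi(v) = \max(a,\min(b,v))$, with the conventions $\min(v,+\infty)=\max(v,-\infty)=v$ when a bound is infinite. Granting the claim, \Cref{lemma:number-of-jumps} follows immediately: every $\vv\in\increasings$ satisfies $v_1\le v_i\le v_n$ for all $i$, so $\increasings(a,b)=\increasings\cap[a,b]^n$ is a nonempty closed convex set and $\Pi_{[a,b]^n}(\vtheta^*)$ indeed lies in it; moreover $\theta_i=\phi(\theta^*_i)$ for every $i$, so the set of values of $\vtheta$ is the image under $\phi$ of the set of values of $\vtheta^*$, whence $k(\vtheta)=\bigl|\{\theta_i\}\bigr|=\bigl|\phi(\{\theta^*_i\})\bigr|\le\bigl|\{\theta^*_i\}\bigr|=k(\vtheta^*)$.

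To prove the claim I would set $\hat\vtheta\coloneqq\Pi_{[a,b]^n}(\vtheta^*)\in\increasings(a,b)$ and verify, using the variational inequality characterizing the Euclidean projection onto the closed convex set $\increasings(a,b)$, that $(\vy-\hat\vtheta)^T(\vv-\hat\vtheta)\le 0$ for every $\vv\in\increasings(a,b)$. I would invoke the classical block structure of isotonic regression: there is a partition of $\{1,\dots,n\}$ into consecutive blocks $B_1,\dots,B_m$ on each of which $\vtheta^*$ is constant and equal to the mean $\bar y_r$ of $\vy$ over $B_r$; in particular $\sum_{i\in B_r}(y_i-\theta^*_i)=0$ for each $r$, and $\hat\vtheta$, being a coordinatewise function of $\vtheta^*$, is constant and equal to $\phi(\bar y_r)$ on $B_r$. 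Writing $\vy-\hat\vtheta=(\vy-\vtheta^*)+(\vtheta^*-\hat\vtheta)$ and expanding,
\begin{equation}
    (\vy-\hat\vtheta)^T(\vv-\hat\vtheta)=(\vy-\vtheta^*)^T\vv-(\vy-\vtheta^*)^T\hat\vtheta+(\vtheta^*-\hat\vtheta)^T(\vv-\hat\vtheta),
\end{equation}
and I would bound the three terms separately: $(\vy-\vtheta^*)^T\vv\le 0$ because $\increasings$ is a convex cone, $\vtheta^*=\Pi_{\increasings}(\vy)$, and $\vv\in\increasings$; $(\vy-\vtheta^*)^T\hat\vtheta=\sum_{r=1}^m\phi(\bar y_r)\sum_{i\in B_r}(y_i-\theta^*_i)=0$ by the block structure; and $(\vtheta^*-\hat\vtheta)^T(\vv-\hat\vtheta)\le 0$ because $\hat\vtheta=\Pi_{[a,b]^n}(\vtheta^*)$ and $\vv\in[a,b]^n$ — this is the only place the box constraints enter. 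Adding the three inequalities gives $(\vy-\hat\vtheta)^T(\vv-\hat\vtheta)\le 0$, which proves the claim.

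I expect the only non-routine ingredient to be the identification $\vtheta=\Pi_{[a,b]^n}(\vtheta^*)$; everything after it is bookkeeping, and the one step inside it that genuinely uses truncation is the box variational inequality $(\vtheta^*-\hat\vtheta)^T(\vv-\hat\vtheta)\le 0$, the rest being the standard block-mean description of the isotonic projection together with the defining inequality of a convex projection. An equivalent route is through normal cones: $\hat\vtheta$ keeps at least all the monotonicity constraints active that $\vtheta^*$ has, so $\vy-\vtheta^*\in N_{\increasings}(\vtheta^*)\subseteq N_{\increasings}(\hat\vtheta)$, while $\vtheta^*-\hat\vtheta\in N_{[a,b]^n}(\hat\vtheta)$, and one uses that the normal cone of the intersection is the sum of the two normal cones; this is the same computation in disguise. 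The degenerate cases $a=-\infty$ or $b=+\infty$ are covered by the same argument, the box becoming $(-\infty,b]^n$, $[a,+\infty)^n$, or $\R^n$ (in the last case $\vtheta=\vtheta^*$ trivially).
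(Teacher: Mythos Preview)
Your proof is correct and reaches the same key identification as the paper: $\vtheta$ is the coordinatewise truncation $\Pi_{[a,b]^n}(\vtheta^*)$ of the unconstrained isotonic fit, from which $k(\vtheta)\le k(\vtheta^*)$ is immediate. Both arguments verify this by checking the variational inequality for the projection onto $\increasings(a,b)$ at the candidate $\hat\vtheta$.

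The organization of that verification differs. The paper splits the inner product over the three index sets $T_a=\{\theta^*_i\le a\}$, $T_c=\{a<\theta^*_i<b\}$, $T_b=\{\theta^*_i\ge b\}$, drops two sign-definite cross terms, and reassembles what remains as $(\vv-\vtheta^*)^T(\vy-\vtheta^*)$ for an explicitly constructed $\vv\in\increasings$, to which the variational inequality for $\vtheta^*$ applies directly. Your decomposition $\vy-\hat\vtheta=(\vy-\vtheta^*)+(\vtheta^*-\hat\vtheta)$ instead invokes three modular facts: $(\vy-\vtheta^*)\in(\increasings)^\circ$ from the Moreau decomposition, the block-mean property of isotonic regression to kill $(\vy-\vtheta^*)^T\hat\vtheta$, and the box variational inequality. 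The trade-off is that your route is cleaner and makes the normal-cone picture transparent (indeed your closing remark about $N_{\increasings(a,b)}=N_{\increasings}+N_{[a,b]^n}$ is exactly what is going on), but it imports the block-mean structure of $\vtheta^*$ as an external fact; the paper's argument is more hands-on but entirely self-contained, needing only the projection characterization of $\vtheta^*$.
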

The intuition behind this Lemma is that if a constraint is not saturated for $\vtheta$, this constraint
is not saturated for $\vtheta^*$ either, so $\vtheta^*$ has at least as many jumps as $\vtheta$.
\begin{proof}[Proof of \Cref{lemma:number-of-jumps}]
    Let $T_a = \{i=1,...,n: \hat\theta^*_i \le a \}$,
    $T_b = \{i=1,...,n: \hat\theta^*_i \ge b \}$
    and $T_c = \{i=1,...,n: a< \hat\theta^*_i < b \}$.
    We will prove that the unique minimizer $\vtheta$ of the problem \eqref{eq:problem-vtheta}
    is
    \begin{equation}
        \vtheta_{T_a} = a \vc_{T_a},
        \qquad
        \vtheta_{T_c} = \vtheta^*_{T_c},
        \qquad
        \vtheta_{T_b} = b \vc_{T_b},
        \label{eq:vtheta-from-vthetastar}
    \end{equation}
    where $\vc=(1,...,1)^T\in\R^n$.
    Then it is clear that $k(\vtheta) = 1 + k(\vtheta^*_{T_c}) + 1  \le k(\vtheta^*_{T_a}) + k(\vtheta^*_{T_c}) + k(\vtheta^*_{T_b}) = k(\vtheta^*)$.

    First, by strong convexity there exists
    a unique solution to the minimization problem \eqref{eq:problem-vtheta},
    and a unique solution to the minimization problem \eqref{eq:problem-vtheta-star}.
    Second, by the characterization of the projection onto the closed convex set  $\increasings(a,b)$,
    if $\vtheta$ satisfies
    \begin{equation}
        A_\vu \coloneqq (\vu - \vtheta)^T(\vy - \vtheta)
        \le 0
    \end{equation}
    for all $\vu\in\increasings(a,b)$,
    then $\vtheta$ is the unique solution to the minimization problem \eqref{eq:problem-vtheta}.
    Let $\vtheta$ be defined by \eqref{eq:vtheta-from-vthetastar}.
    By simple algebra, for all $\vu\in\increasings(a,b)$,
    \begin{align}
        A_\vu 
        = &
        (\vu_{T_a} - a \vc_{T_a} + \vtheta^*_{T_a} - \vtheta^*_{T_a})^T(\vy_{T_a} - \vtheta^*_{T_a})
        + (\vu_{T_a} - a \vc_{T_a})^T(\vtheta^*_{T_a} - a \vc_{T_a})
        \\
        &+ (\vu_{T_b} - b \vc_{T_b} + \vtheta^*_{T_b} - \vtheta^*_{T_b})^T(\vy_{T_b} - \vtheta^*_{T_b})
        + (\vu_{T_b} - b \vc_{T_b})^T(\vtheta^*_{T_b} - b \vc_{T_b})
        \\
        &+
        (\vu_{T_c} - \vtheta^*_{T_c})^T(\vy_{T_c} - \vtheta^*_{T_c}).
    \end{align}
    If a vector $\vv$ has nonnegative entries
    and a vector $\vx$ have non-positive entries, then $\vv^T\vx\le 0$, so 
    $(\vu_{T_a} - a \vc_{T_a})^T(\vtheta^*_{T_a} - a \vc_{T_a}) \le 0$
    and
    $(\vu_{T_b} - b \vc_{T_b})^T(\vtheta^*_{T_b} - b \vc_{T_b}) \le 0$.
    Thus,
    \begin{align}
        A_\vu 
        \le &
        (\vu_{T_a} - a \vc_{T_a} + \vtheta^*_{T_a} - \vtheta^*_{T_a})^T(\vy_{T_a} - \vtheta^*_{T_a})
        \\
        &+ (\vu_{T_b} - b \vc_{T_b} + \vtheta^*_{T_b} - \vtheta^*_{T_b})^T(\vy_{T_b} - \vtheta^*_{T_b})
        \\
        &+
        (\vu_{T_c} - \vtheta^*_{T_c})^T(\vy_{T_c} - \vtheta^*_{T_c}),
    \end{align}
    and the right hand side of the previous display is equal to
    \begin{equation}
        (\vv-\vtheta^*)^T(\vy - \vtheta^*),
        \label{eq:npquantity}
    \end{equation}
    where $\vv\in\R^n$ is defined by 
    \begin{equation}
        \vv_{T_a} \coloneqq 
        \vu_{T_a} - a \vc_{T_a} + \vtheta^*_{T_a},
        \qquad
        \vv_{T_c} \coloneqq 
        \vu_{T_c},
        \qquad
        \vv_{T_b} \coloneqq
        \vu_{T_b} - b \vc_{T_b} + \vtheta^*_{T_b}.
    \end{equation}
    We have $\vv\in\increasings$ by definition of $T_a,T_c$ and $T_b$.
    The quantity \eqref{eq:npquantity} is non-positive because $\vtheta^*$ is the projection of $\vy$ onto the convex set $\increasings$.
    Thus we have established that $A_\vu \le 0$
    for all $\vu\in\increasings(a,b)$, 
    so that the unique solution of the minimization problem \eqref{eq:problem-vtheta} is given by the expression \eqref{eq:vtheta-from-vthetastar}.
\end{proof}

\section{Proofs for convex sequences}
\label{appendix:convex}

\begin{proof}[Proof of \Cref{thm:confidence-conv}]
    For any set $T$ of the form \eqref{eq:form-T},
    using the concentration property \eqref{eq:concentration-squarednorm}
    of the random variable \eqref{eq:almost-sure-delta}
    with $K = \mathcal S^{\cap}_{|T|}$,
    we have with probability greater than $1-\alpha$,
    \begin{equation}
        \euclidnorms{\Pi_{\mathcal S^{\cap}_{|T|}}(\vxi_T)}
        \le 2 \delta\left( \mathcal S^{\cap}_{|T|}\right) + 10\log(1/\alpha)
        \le 20 \log(en) + 10\log(1/\alpha),
    \end{equation}
    where we used \eqref{eq:delta-convex} for the last inequality.
    There are less that $n^2$ sets $T\subset \{1,...,n\}$ of the form \eqref{eq:form-T}.
    Using the union bound for all sets $T$ of the form \eqref{eq:form-T},
    we have 
    $\mathbb P ( \Omega(\alpha) ) \ge 1 - \alpha$ where
    \begin{equation}
        \Omega(\alpha) \coloneqq
        \left\{
            \sup_{T\in\{T_{s,e}, 1\le s\le e\le n\}}
            \euclidnorms{\Pi_{\mathcal S^{\cap}_{|T|}}(\vxi_T)}
            \le \sigma^2 \left(20 \log(en) + 10\log\left(\frac{n^2}{\alpha} \right) \right) 
        \right\}.
    \end{equation}

    Let $\hmu=\ls(\convexs)$ for notational simplicity.
    Then
    \eqref{eq:strong-convexity-confidence} with $\vu$ replaced
    by $\vmu$ can be rewritten as
    \begin{equation}
        \euclidnorms{\hmu - \vmu}
        \le
        2 \vxi^T(\hmu-\vmu)
        - \euclidnorms{\hmu-\vmu}.
    \end{equation}
    By definition of $q(\cdot)$,
    there exists a partition $(\hat T_1,...,\hat T_{\hat q})$ of $\{1,...,n\}$
    such that
    $\ls(\convexs)$ is affine on each $\hat T_j$, $j=1,...,\hat q$.
    Furthermore, each $\hat T_j$ has the form \eqref{eq:form-T} because $\ls(\convexs)\in\convexs$.
    We have
    \begin{align}
        2 \vxi^T(\hmu-\vmu)
        - \euclidnorms{\hmu-\vmu}
        & =
        \sum_{j=1}^{\hat q }
        2 \vxi_{\hat T_j}^T(\hmu-\vmu)_{\hat T_j}
        - \euclidnorms{(\hmu-\vmu)_{T_j}}, \\
        & \le 
        \sum_{j=1}^{\hat q }
        \left( \frac{ \vxi_{\hat T_j}^T(\hmu-\vmu)_{\hat T_j} }{\euclidnorm{(\hmu-\vmu)_{\hat T_j}}} \right)^2,
    \end{align}
    where we have used 
    $2ab - a^2 \le b^2$.
    By definition of $(\hat T_1,...,\hat T_{\hat q})$,
    $\hmu$ is affine on $\hat T_j$ for each $j=1,...,\hat q$,
    thus the vector 
    $
    (\hmu-\vmu)_{\hat T_j}\in \mathcal S^{\cap}_{|\hat T_j|}
    $ is a concave sequence.
    By taking the supremum, we obtain
    \begin{equation}
        \euclidnorms{\hmu - \vmu}
        \le
        \sum_{j=1}^{\hat q}
        \sup_{\vv\in\mathcal S^{\cap}_{|\hat T_j|} : \euclidnorms{\vv} \le 1 }
        (\vxi_{\hat T_j}^T \vv)^2
        =
        \sum_{j=1}^{\hat q}
        \euclidnorms{\Pi_{\mathcal S^{\cap}_{|\hat T_j|}}(\vxi_{|\hat T_j|})}
        ,
    \end{equation}
    where we used \eqref{eq:almost-sure-delta} for the last equality.
    On the event $\Omega(\alpha)$ and by definition of $\radiusConv$,
    \begin{equation}
        \euclidnorms{\hmu - \vmu}
        \le
        \sum_{j=1}^{\hat q}
        \euclidnorms{\Pi_{\mathcal S^{\cap}_{|\hat T_j|}}(\vxi_{|\hat T_j|})}
        \le 
        \sigma^2 \hat q \left(
            20 \log(en) +  10 \log(n^2/\alpha) 
        \right)
        = n \radiusConv.
    \end{equation}
\end{proof}

\bibliographystyle{plainnat}
\bibliography{../../bibliography/db}

\end{document}